\def\Ec{{\mathbb E}}
\def\Pc{{\mathbb P}}
\def\norm#1{\|#1\|}
\def\bF #1{\| #1 \|_F}   
\def\BF #1{\| #1 \|_F^2}  
\numberwithin{property}{section}
\numberwithin{figure}{section}
\numberwithin{table}{section}
\renewcommand{\section}{\@startsection{section}{1}{0mm}
{-\baselineskip}{0.5\baselineskip}{\Large\bf\leftline}}
\begin{document}

\title{On the relaxed greedy randomized Kaczmarz methods with momentum acceleration for solving matrix equation $AXB=C$}

\author{Nian-Ci Wu \affil{1}, Yang Zhou \affil{2}, and Zhaolu Tian \affil{3}\comma\corrauth}

\address{
\affilnum{1}\ School of Mathematics and Statistics, South-Central Minzu University, Wuhan 430074, China.\\
\affilnum{2}\ School of Mathematics and Computational Science, Huaihua University, Huaihua 418000, China.\\
\affilnum{3}\ College of Applied Mathematics, Shanxi University of Finance and Economics, Taiyuan 030006, China.\\
        }
\markboth{N.-C. Wu, Y. Zhou, and Z. Tian}{The PmRGRK and NmRGRK methods}

\email{{\tt tianzhaolu2004@126.com} (Z. Tian)}

\begin{abstract}
With the growth of data, it is more important than ever to develop an efficient and robust method for solving the consistent matrix equation $AXB=C$. The randomized Kaczmarz (RK) method has received a lot of attention because of its computational efficiency and low memory footprint. A recently proposed approach is the matrix equation relaxed greedy RK (ME-RGRK) method, which greedily uses the loss of the index pair as a threshold to detect and avoid projecting the working rows onto that are too far from the current iterate. In this work, we utilize the Polyak's  and Nesterov's  momentums to further speed up the convergence rate of the ME-RGRK method. The resulting methods are shown to converge linearly to a least-squares solution with minimum Frobenius norm. Finally, some numerical experiments are provided to illustrate the feasibility and effectiveness of our proposed methods. In addition, a real-world application, i.e., tensor product surface fitting in computer-aided geometry design, has also been presented for explanatory purpose.
\end{abstract}

\keywords{Greedy randomized selection, Kaczmarz iterate, Polyak's  momentum, Nesterov's  momentum, tensor product surface fitting}

\ams{5F10, 65F25, 65F45, 65D10}
\maketitle


\section{Introduction}\label{sec1}
Consider an iterative solution of large-scale linear matrix equation of the form
\begin{equation}\label{AXB=C}
AXB=C.
\end{equation}
That is,  $A \in \mathbb{R}^{m\times n}$ and $B \in \mathbb{R}^{n\times p}$ ($m,p\geq n$) are two coefficient matrices, $C \in \mathbb{R}^{m\times p}$ is a right-hand side,
 and $X \in \mathbb{R}^{n\times n}$ is an unknown matrix. This kind of problem  has been discussed in the areas of  a variety of real-world applications, such as tensor product surface fitting in computer-aided geometry design; see, e.g., \cite{20LLFZ}.


For solving such a problem \eqref{AXB=C}, the gradient-type method is a highly popular representative in practice. Let $F(X)$ be a differentiable function. For $k=0,1,2,\cdots$, the gradient descent (GD) iteration can be formulated as
\begin{align}
  X^{(k+1)} = X^{(k)} - \alpha \nabla F(X^{(k)})
\end{align}
with $\alpha$ being a step-size and $\nabla F(X)$ being the gradient of $F(X)$. We can see that $X^{(k+1)}$ follows the negative gradient of $F(X)$ to locate its minimum value. If one takes $F(X) = \bF{C-AXB}^2/2$, the gradient-based iterative method \cite[Theorem 2]{08DLD} emerges. Polyak's  momentum, popularly known as heavy ball momentum, is one of the most influential acceleration procedures for GD to solve the unconstrained minimization problems \cite{64Polyak}. The iteration scheme is given by
\begin{align}\label{eq:Polyak's+GD}
  X^{(k+1)} = X^{(k)} - \alpha \nabla F(X^{(k)}) + \beta ( X^{(k)} - X^{(k-1)} ),
\end{align}
where $\beta\geq0$ is a momentum parameter. Nesterov's  momentum is another extension of the GD method. Specifically, given two initial matrices $X^{(0)}$, $Y^{(0)}\in \mathbb{R}^{n\times n}$, the new approximation is computed by
\begin{align}\label{eq:Nesterov's+GD}
  X^{(k+1)} = Y^{(k+1)} + \beta ( Y^{(k+1)} - Y^{(k)} )
  \quad {\rm with} \quad
  Y^{(k+1)} = X^{(k)} - \alpha \nabla F(X^{(k)}).
\end{align}
This approach was described by and named for Nesterov in \cite{83Nest}. Sutskever et al. are responsible for popularizing it in the training of neural networks with stochastic GD \cite{13SMDH}. For additional details on solving matrix equation \eqref{AXB=C}, we refer to the review \cite{16Sim} and the references, such as \cite{08DH, 22HM, 14KM, 51RM,19Zhang}.

Strohmer and Vershynin pioneered the use of randomization in the Kaczmarz iteration  \cite{Kac37} to solve an over-determined consistent linear system \cite{09SV}. 
A related and well-studied variant of this approach is to perform row selection greedily and randomly \cite{18BW1, 18BW2, 21BW}.
Very recently, by utilizing the Petrov-Galerkin conditions  \cite{Saad2003},  Wu et al. induced the Kaczmarz iteration format for matrix equation \eqref{AXB=C} and proposed the matrix equation relaxed greedy randomized Kaczmarz (ME-RGRK) method \cite{22WLZ}. Let $a_i^T$ and $b_j^T$ be the $i$th and $j$th rows of $A$ and $B^T$, respectively, and $C_{i,j}$ be the $(i,j)$th entry of $C$ for $i\in[m]$ and $j\in[p]$, where we define the set $[\ell] =\{1,2,\cdots,\ell\}$ for any integer $\ell$. After giving an initial matrix $X^{(0)}$, the Kaczmarz iteration is computed by
\begin{equation}\label{IteRK-AXB=C}
 X^{(k+1)}  = X^{(k )} + \frac{C_{i,j} - a_i^T X^{(k )} b_j}{\norm{a_{i}}^2  \norm{b_{j}}^2 } a_i   b_j^T,
\end{equation}
where the index pair $(i,j)$ is chosen according to a well-defined criterion such as an appropriate probability distribution \cite{22WLZ}. The global randomized block Kaczmarz and randomized average block Kaczmarz algorithms were presented by Niu and Zheng \cite{22NZ}. Another popular extension of the projection technique can be found in the work by Du et al. \cite{22DRS}, where a randomized block coordinate descent algorithm was given to deal with a matrix least-squares problem.

Actually, formula \eqref{IteRK-AXB=C} can be seen as a particular case of GD algorithm for minimizing the cost function
\begin{align*}
F(X) = \frac{1}{2}\frac{|C_{i,j}-a_i^T X b_j|^2}{\norm{a_i}^2  \norm{b_j}^2}.
\end{align*}
It is natural to further accelerate the convergence rate of the ME-RGRK method by utilizing the Polyak's and Nesterov's  momentum techniques. To the best of our knowledge, the momentum variant of {\it greedy randomized} iterative methods is new.

In this work, we will present the momentum variants of the ME-RGRK method to solve the matrix equation \eqref{AXB=C} and analyze their convergence. The organization of this work is as follows. We first give a brief description of the ME-RGRK method in Section \ref{sec:ME-RGRK}. Then,  the formal descriptions of the Polyak's and Nesterov's momentum variants of the ME-RGRK method are provided in Sections \ref{sec:mME-RGK+PmRGRK} and \ref{sec:mME-RGK+NmRGRK}, respectively. For simplicity, we name these two methods as PmRGRK and NmRGRK.  The corresponding convergence theories of the PmRGRK and NmRGRK methods are presented in Section \ref{sec:PmRGRK+NmRGRK_convergence}. Next,  in Section \ref{sec:PmRGRK+NmRGRK_numer}, some numerical examples are shown to demonstrate the theoretical results.
Finally, we conclude this paper with some concluding remarks and a future outlook in Section  \ref{sec:PmRGRK+NmRGRK_conclud}.

{\it Notation.} The symbol $\Ec[\cdot]$ denotes the expectation for any random variable. For any matrix $M$,  we use
$M^\dag$, ${\rm   Tr }(M)$, $\sigma_{1}(M)$, and $\sigma_{r}(M)$  to denote
the Moore-Penrose  pseudoinverse, the trace, the largest,  the smallest nonzero singular values,  respectively. The symbol $\norm{\cdot}$ is used to represent the $2$-norm of either a vector or a matrix and $\norm{\cdot}_F$ represents the Frobenius norm for a matrix. The Frobenius inner product is defined by
\begin{align*}
\langle M_1, M_2 \rangle_{F} = {\rm   Tr }(M_1^T M_2) = {\rm   Tr }(M_1 M_2^T)
\end{align*}
for any $M_1$ and $M_2$ being  with compatible dimensions.

\section{The momentum variants of ME-RGRK}\label{sec:PmRGRK+NmRGRK_algorithm}
In this section, we first briefly review the ME-RGRK method for solving the matrix equation \eqref{AXB=C}; see \cite{22WLZ}. Then, we cooperate it with the Polyak's and Nesterov's momentum techniques and present the PmRGRK and NmRGRK methods.

\subsection{The ME-RGRK method}\label{sec:ME-RGRK}

Let $X^\ast = A^{\dag}CB^{\dag}$ be a least norm least-squares solution of the linear matrix equation \eqref{AXB=C}. The next squared error in \eqref{IteRK-AXB=C} can be expressed by
\begin{align*}
  \BF{X^{(k+1)} - X^{\ast}} = \BF{X^{(k)} - X^{\ast}} - W_{i,j}(X^{(k)}),
\end{align*}
where the loss value $W_{i,j}(X^{(k)}) = |R_{i,j}^{(k)}|^2/ (\norm{a_i}^2 \norm{b_j}^2)$ with $R^{(k)}=C-AX^{(k)}B$. If
\begin{align*}
  W_{i_{k_1},j_{k_1}}(X^{(k)}) > W_{i_{k_2},j_{k_2}}(X^{(k)}),
  ~i_{k_1},i_{k_2}\in[m],~j_{k_1},j_{k_2}\in[p],
\end{align*}
we may want the index pair $(i_{k_1},j_{k_1})$ to be selected with a
larger probability prior to $(i_{k_2},j_{k_2})$, so that the larger entries of $W(X^{(k)})$ can be preferentially wiped out as far as possible.

A randomized greedy strategy in \cite{22WLZ} allows the index pair $(i_k,j_k)$ being selected such that
\begin{align*}
    W_{i_k,j_k}(X^{(k)})
       = \theta \cdot  \max\limits_{i\in [m], j\in [p]} \left\{ W_{i,j}(X^{(k)})\right\}
         +  (1-\theta)\cdot \Ec \left[  W_{i,j}(X^{(k)}) \right ],
\end{align*}
where $\theta$ is a relaxation parameter. In particular, after selecting the indices $i\in [m]$ and $j\in [p]$ with probabilities
\begin{equation*}
  \Pc({\rm Index}_1 = i)  =  \frac{\norm{a_i}^2}{\BF{A}}
  \quad {\rm and} \quad
  \Pc({\rm Index}_2 = j)  =  \frac{\norm{b_j}^2}{\BF{B}},
\end{equation*}
respectively, the expected value of the discrete random variable $(i,j)$ is given by
\begin{equation*}
 \Ec \left[  W_{i,j}(X^{(k)}) \right ]
 = \sum\limits_{i\in [m], j\in [p]}   \Pc({\rm Index}_1 = i) \cdot  \Pc({\rm Index}_2 = j) \cdot W_{i,j}(X^{(k)})
 = \frac{\BF{R^{(k)}}}{\BF{A}\BF{B}}.
\end{equation*}
This strategy can effectively detect the index pairs who have small loss.

Having the above preparations, the formal description of the ME-RGRK method is stated in Algorithm \ref{alg:ME-RGRK}. For more details, we refer to \cite[Section 4.1]{22WLZ}.

\begin{algorithm}[!htb]
\caption{The ME-RGRK method \cite{22WLZ}}
\label{alg:ME-RGRK}
\begin{algorithmic}[1]
\Require
The coefficient matrices $A \in \mathbb{R}^{m\times n}$ and $B \in \mathbb{R}^{n\times p}$ ($m,p\geq n$), and a right-hand side $C \in \mathbb{R}^{m\times p}$, an initial matrix $X^{(0)} \in \mathbb{R}^{n\times n}$, a relaxation parameter $\theta$, and the maximum iteration number $\ell$.
\Ensure
$X^{(\ell)}$.
\State {\bf for} $k=1,2,\cdots,\ell-1$ {\bf do}
\State \quad determine the index set $\Delta_k$  according to
\begin{align*}
  \Delta_k =\left\{(i,j)\Bigg|
~W_{i,j}(X^{(k)})
  \geq \delta_k \BF{R^{(k)}},~i\in[m],~j\in[p] \right\},
\end{align*}
\quad where
\begin{align*}
\delta_k =
  \frac{\theta}{\BF{R^{(k)}}}
  \max\limits_{i\in [m], j\in [p]} \left\{  W_{i,j}(X^{(k)}) \right\}
  + \frac{1-\theta}{\bF{ A }^2\bF{ B }^2};
  \end{align*}
\State \quad select the index pair $(i_k,j_k)$ from $\Delta_k$ with probability $p_{i_k,j_k}\geq 0$, where
\begin{align*}
  \sum_{(i_k,j_k)\in \Delta_k}p_{i_k,j_k}=1;
\end{align*}
\State \quad compute the next approximation as
\begin{align*}
 X^{(k+1)} = X^{(k)} + \frac{C_{i_k,j_k}-a_{i_k}^T X^{(k)} b_{j_k} }{\norm{a_{i_k}}^2  \norm{b_{j_k}}^2} a_{i_k}  b_{j_k}^T;
\end{align*}
\State {\bf endfor}
\end{algorithmic}
\end{algorithm}

\subsection{The PmRGRK method}\label{sec:mME-RGK+PmRGRK}

The mechanism behind the three-term recurrence in \eqref{eq:Polyak's+GD} involves two basic computational procedures. The first-half step updates $X^{(k)}$ along the negative gradient and the second-half step utilizes the addition of the momentum term.  The Polyak's momentum method, resulting in an accelerated convergence, is intuitive. A heavier ball will bounce less and move faster through regions of low curvature than a lighter ball due to the added momentum.

Based on this idea, we describe the calculation process of the PmRGRK method as follows. Consider the cost function
\begin{align}\label{eq:PmRGRK+NmRGRK_F_ij(X)}
F(X) = \frac{1}{2}W_{i,j}(X),~i\in[m],~j\in[p],
\end{align}
for any unknown matrix $X\in \mathbb{R}^{n \times n}$, whose gradient is easily computed by
\begin{align}\label{eq:PmRGRK+NmRGRK_gradient-F_ij(X)}
\nabla F(X)=-\frac{C_{i,j}-a_i^T X b_j }{\norm{a_i}^2  \norm{b_j}^2} a_i  b_j^T.
\end{align}
The closed-form of the Polyak's momentum variant for matrix equation Kaczmarz iteration is explicitly derived by
 \begin{align*}
 X^{(k+1)} = X^{(k)} +  \alpha \frac{C_{i,j}-a_i^T X^{(k)} b_j }{\norm{a_i}^2  \norm{b_j}^2} a_i  b_j^T
 + \beta ( X^{(k)} - X^{(k-1)} ),
\end{align*}
where $\alpha$ is a step-size and $\beta$ is a momentum parameter.

A key ingredient to guarantee fast convergence of the Kaczmarz iterative method is the construction of an appropriate criterion for the choice of row index pair $(i,j)$. Inspired by the adaptive greedy index selection strategy in the standard ME-RGRK algorithm, the PmRGRK method for solving matrix equation \eqref{AXB=C} is formally stated in Algorithm \ref{alg:PmRGRK}.

\begin{algorithm}[!htb]
\caption{The PmRGRK method}
\label{alg:PmRGRK}
\begin{algorithmic}[1]
\Require
The coefficient matrices $A \in \mathbb{R}^{m\times n}$ and $B \in \mathbb{R}^{n\times p}$ ($m,p\geq n$), and a right-hand side $C \in \mathbb{R}^{m\times p}$, two initial matrices $X^{(0)}$, $X^{(1)} \in \mathbb{R}^{n\times n}$, a relaxation parameter $\theta$, a step-size $\alpha$, a momentum parameter $\beta$, and the maximum iteration number $\ell$.
\Ensure
$X^{(\ell)}$.
\State {\bf for} $k=1,2,\cdots,\ell-1$ {\bf do}
\State \quad determine the index set $\Delta_k$  according to
\begin{align*}
  \Delta_k =\left\{(i,j)\Bigg|
~W_{i,j}(X^{(k)})
  \geq \delta_k \BF{R^{(k)}},~i\in[m],~j\in[p] \right\},
\end{align*}
\quad where
\begin{align*}
\delta_k =
  \frac{\theta}{\BF{R^{(k)}}}
  \max\limits_{i\in [m], j\in [p]} \left\{  W_{i,j}(X^{(k)}) \right\}
  + \frac{1-\theta}{\bF{ A }^2\bF{ B }^2};
  \end{align*}
\State \quad select the index pair $(i_k,j_k)$ from $\Delta_k$ with probability $p_{i_k,j_k}\geq 0$, where
\begin{align*}
  \sum_{(i_k,j_k)\in \Delta_k}p_{i_k,j_k}=1;
\end{align*}
\State \quad compute the next approximation as
\begin{align*}
 X^{(k+1)} = X^{(k)} +  \alpha \frac{C_{i_k,j_k}-a_{i_k}^T X^{(k)} b_{j_k} }{\norm{a_{i_k}}^2  \norm{b_{j_k}}^2} a_{i_k}  b_{j_k}^T
 + \beta ( X^{(k)} - X^{(k-1)} );
\end{align*}
\State {\bf endfor}
\end{algorithmic}
\end{algorithm}

\begin{remark}
Polyak's momentum has been extended to solve the constrained and distributed optimization problems, confirming its performance advantages over standard gradient-based methods; see, e.g., \cite{13GSJ,14WM}. In the context of solving liner system, Polyak's  momentum technique has been
 spurred many related works by incorporating into various randomized iterative methods, e.g., randomized coordinate descent and Kaczmarz \cite{20LR}, sketch and project \cite{20LR}, sampling Kaczmarz Motzkin \cite{22MIA}, randomized Douglas-Rachford \cite{22HSX}, doubly stochastic iterative framework \cite{22HX}, and so on.
\end{remark}

\subsection{The NmRGRK method}\label{sec:mME-RGK+NmRGRK}

We know from the physical background of the momentum method that the search may miss or overshoot the minimum value at the bottom of basins or valleys in some cases due to momentum acceleration. Nesterov's  accelerated gradient method \cite{83Nest,03Nest} is a possible remedy for it, which is a popular and effective momentum variant and closely related to the gradient descent method.


Similar to the heuristic in PmRGRK, we consider the cost function in formula \eqref{eq:PmRGRK+NmRGRK_F_ij(X)}. The NmRGRK iteration is divided into two parts. We first compute an auxiliary matrix $Y^{(k)}$ according to
\begin{align*}
Y^{(k+1)} = X^{(k)} +  \alpha \frac{C_{i,j}-a_{i}^T X^{(k)} b_{j} }{\norm{a_{i}}^2  \norm{b_{j}}^2} a_{i}  b_{j}^T
\end{align*}
for $i\in [m]$ and $j\in [p]$ with $\alpha$ being a step-size, then perform an iteration of Nesterov's  momentum, i.e.,
\begin{align*}
 X^{(k+1)} = Y^{(k+1)} + \beta ( Y^{(k+1)} - Y^{(k)} ).
\end{align*}
It indicates that $X^{(k+1)}$ is updated by using the gradients at the current iteration, as opposed to the PmRGRK method, which computes the next approximation by using the previous iterations. Algorithm \ref{alg:NmRGRK} describes the NmRGRK method in detail.

\begin{algorithm}[!htb]
\caption{The NmRGRK method}
\label{alg:NmRGRK}
\begin{algorithmic}[1]
\Require
The coefficient matrices $A \in \mathbb{R}^{m\times n}$ and $B \in \mathbb{R}^{n\times p}$ ($m,p\geq n$), and a right-hand side $C \in \mathbb{R}^{m\times p}$, two initial matrices $X^{(0)}$, $Y^{(0)} \in \mathbb{R}^{n\times n}$, a relaxation parameter $\theta$, a step-size $\alpha$, a momentum parameter $\beta$, and the maximum iteration number $\ell$.
\Ensure
$X^{(\ell)}$.
\State {\bf for} $k=0,1,2,\cdots,\ell-1$ {\bf do}
\State \quad determine the index set $\Delta_k$  according to
\begin{align*}
  \Delta_k =\left\{(i,j)\Bigg|
~W_{i,j}(X^{(k)})
  \geq \delta_k \BF{R^{(k)}},~i\in[m],~j\in[p] \right\},
\end{align*}
\quad where
\begin{align*}
\delta_k =
  \frac{\theta}{\BF{R^{(k)}}}
  \max\limits_{i\in [m], j\in [p]} \left\{  W_{i,j}(X^{(k)}) \right\}
  + \frac{1-\theta}{\bF{ A }^2\bF{ B }^2};
  \end{align*}
\State \quad select the index pair $(i_k,j_k)$ from $\Delta_k$ with probability $p_{i_k,j_k}\geq 0$, where
\begin{align*}
  \sum_{(i_k,j_k)\in \Delta_k}p_{i_k,j_k}=1;
\end{align*}
\State \quad compute the next approximation as
\begin{align*}
 X^{(k+1)} = Y^{(k+1)} + \beta ( Y^{(k+1)} - Y^{(k)} )
\end{align*}
\quad with
\begin{align*}
 Y^{(k+1)} = X^{(k)} +  \alpha \frac{C_{i_k,j_k}-a_{i_k}^T X^{(k)} b_{j_k} }{\norm{a_{i_k}}^2  \norm{b_{j_k}}^2} a_{i_k}  b_{j_k}^T;
\end{align*}
\State {\bf endfor}
\end{algorithmic}
\end{algorithm}

\begin{remark}
The updates in PmRGRK and NmRGRK are reminiscent of Polyak's  and Nesterov's  momentum techniques, respectively. They are all variants of the stochastic GD methods, which have gained much popularity due to their small memory footprint and good theoretical guarantees; see, e.g., \cite{22HNRS}. When the index pair is selected directly with a probability proportional to its Euclidean norm, we can obtain the randomized Kaczmarz method with Polyak's and Nesterov's momentums to solve the matrix equation \eqref{AXB=C}. The standard Polyak's  momentum variant of the randomized Kaczmarz method, but with no greedy selection, for solving a linear system was analyzed by Morshed et al. in \cite{20LR}. In the PmRGRK and NmRGRK methods, we propose to replace the selection of $(i_k,j_k)$ with a relaxed greedy rule. As far as we know, the introduction of Polyak's  and Nesterov's  momentums to accelerate the greedy and randomized Kaczmarz method is new.
\end{remark}

Together with increasing the number of iteration steps, the probability criterion for selecting the working rows in the coefficient matrices changes correspondingly. Therefore, the PmRGRK and NmRGRK methods are adaptive. Also, we point out that, since
\begin{align*}
    \max\limits_{i\in [m], j\in [p]} \left\{ W_{ij}(X^{(k)})\right\}
       \geq \frac{ \BF{R^{(k)}} }{\bF{ A }^2\bF{ B }^2},
\end{align*}
the index set $\Delta_k$ generated by Algorithms \ref{alg:PmRGRK} and \ref{alg:NmRGRK} will not be empty and is well defined.

\begin{remark}
When $\alpha=1$ and $\beta=0$, the PmRGRK and NmRGRK methods automatically reduce to the standard ME-RGRK method \cite{22WLZ}. The main difference between PmRGRK and ME-RGRK is the introduction of step-size $\alpha$ and momentum term $\beta ( X^{(k)} - X^{(k-1)} )$ in the computing of $X^{(k+1)}$. It is a similar story for the NmRGRK and ME-RGRK methods.
\end{remark}

\begin{remark}
Whatever the step-size is chosen at each PmRGRK and NmRGRK iteration step, we just require additional $3n^2$ flopping operations (flops) to compute the momentum term. Another computation process for both PmRGRK and NmRGRK is intensive in the selection of index pair $(i,j)$ because one needs to compute the residual entries $R_{i,j}^{(k)}$ and construct the index set $\Delta_k$. However, the index selection in the PmRGRK and NmRGRK methods uses the loss of $(i,j)$ as a threshold to detect and avoid projecting the working rows onto those that are too far from the current iteration. In most cases, the greedy iterative methods produce a higher quality of robustness and a faster convergence rate, which can outweigh the additional cost. Later, this advantage will become apparent for the test instances in the numerical section; see Section \ref{sec:PmRGRK+NmRGRK_numer}.
\end{remark}

The following part will go over several fundamental properties of the PmRGRK and NmRGRK methods.

\begin{proposition}\label{PmRGRK:gammak}
At the $k$th PmRGRK or NmRGRK iteration, let
\begin{align*}
 \zeta_k = \bF{ A }^2\bF{ B }^2 - \sum_{(i,j)\in \Omega_k} \norm{a_i}^2 \norm{b_j}^2
\end{align*}
with $\Omega_k  = \left\{ (i,j)\big|W_{ij}(X^{(k)})=0\right\}$ for $k=1,2,\cdots$. We have
\begin{align*}
\delta_k  \bF{ A }^2\bF{ B }^2 \geq \gamma_k \geq 1,
\end{align*}
where the parameter $ \gamma_k$ is defined by $ \gamma_k = \theta \bF{ A }^2\bF{ B }^2 / \zeta_k +(1-\theta)$.
\end{proposition}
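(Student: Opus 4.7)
The plan is to unwind the definitions of $\delta_k$ and $\gamma_k$ and reduce the two inequalities to a single estimate that compares $\|R^{(k)}\|_F^2$ with the product $\zeta_k \max_{i,j} W_{i,j}(X^{(k)})$. Throughout I will use the identity $|R_{i,j}^{(k)}|^2 = W_{i,j}(X^{(k)})\,\|a_i\|^2\|b_j\|^2$ and the observation that $W_{i,j}(X^{(k)})=0$ is equivalent to $R_{i,j}^{(k)}=0$, so $\Omega_k$ is precisely the set of index pairs contributing nothing to $\|R^{(k)}\|_F^2$.

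For the right inequality $\gamma_k\ge 1$, I would argue directly from the definition of $\zeta_k$. Since each $\|a_i\|^2\|b_j\|^2\ge 0$, we have $\zeta_k\le \|A\|_F^2\|B\|_F^2$, and $\zeta_k>0$ because otherwise the residual would vanish and the iteration would already have terminated. Assuming $\theta\in[0,1]$ (as is standard for the relaxation parameter), dividing gives $\theta\|A\|_F^2\|B\|_F^2/\zeta_k\ge \theta$, and adding $1-\theta$ to both sides yields $\gamma_k\ge 1$.

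For the left inequality $\delta_k\|A\|_F^2\|B\|_F^2\ge\gamma_k$, I would substitute the definitions and cancel the common $(1-\theta)$ term, reducing the claim to
\begin{align*}
\frac{\theta\,\|A\|_F^2\|B\|_F^2}{\|R^{(k)}\|_F^2}\,\max_{i\in[m],\,j\in[p]}W_{i,j}(X^{(k)})\;\ge\;\frac{\theta\,\|A\|_F^2\|B\|_F^2}{\zeta_k},
\end{align*}
which after clearing the common factor is equivalent to
\begin{align*}
\zeta_k \cdot \max_{i\in[m],\,j\in[p]} W_{i,j}(X^{(k)}) \;\ge\; \|R^{(k)}\|_F^2.
\end{align*}
This is the single kernel estimate the whole proposition rests on.

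To establish that estimate, I would expand the Frobenius norm as a sum over $(i,j)\notin \Omega_k$ (the terms in $\Omega_k$ contribute zero), bound each $|R_{i,j}^{(k)}|^2 = W_{i,j}(X^{(k)})\|a_i\|^2\|b_j\|^2$ by $\bigl(\max_{i',j'}W_{i',j'}(X^{(k)})\bigr)\|a_i\|^2\|b_j\|^2$, and then recognize the resulting sum $\sum_{(i,j)\notin\Omega_k}\|a_i\|^2\|b_j\|^2$ as exactly $\zeta_k$. The only mildly delicate point is making sure the set $\Omega_k$ is handled correctly so that the inequality is tight when only a few entries of $R^{(k)}$ are nonzero; there is no real obstacle, just bookkeeping. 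Combining the two inequalities yields $\delta_k\|A\|_F^2\|B\|_F^2\ge\gamma_k\ge 1$, completing the proof.
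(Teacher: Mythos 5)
Your proof is correct and follows essentially the same route as the paper's: both arguments come down to writing $\BF{R^{(k)}}=\sum_{(i,j)\notin\Omega_k}W_{i,j}(X^{(k)})\norm{a_i}^2\norm{b_j}^2$, bounding each $W_{i,j}(X^{(k)})$ by the maximum to get $\BF{R^{(k)}}\leq \zeta_k\max_{i,j}W_{i,j}(X^{(k)})$, and then reading off $\delta_k\bF{A}^2\bF{B}^2\geq\gamma_k$, with $\gamma_k\geq 1$ following from $0<\zeta_k\leq\bF{A}^2\bF{B}^2$. The only cosmetic difference is that you isolate the kernel inequality explicitly while the paper carries it inside a single chain of estimates on $\delta_k\bF{A}^2\bF{B}^2$.
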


\begin{proof}
An elementary computation shows that
  \begin{align*}
  \delta_k  \bF{ A }^2\bF{ B }^2
  &=  \theta
  \frac{ \bF{ A }^2\bF{ B }^2 }{ \BF{ R^{(k)} } }
  \max\limits_{i\in [m], j\in [p]} \left\{ W_{i,j}(X^{(k)}) \right\}
  + (1-\theta)  \\
   & = \theta
  \frac{\bF{ A }^2\bF{ B }^2 \max\limits_{i\in [m], j\in [p]} \left\{ W_{i,j}(X^{(k)}) \right\}}{
  \sum\limits_{i\in [m], j\in [p]}
  W_{i,j}(X^{(k)}) \norm{a_{i}}^2  \norm{b_j}^2}
  + (1-\theta) \\
   & = \theta
  \frac{ \bF{ A }^2\bF{ B }^2 \max\limits_{i\in [m], j\in [p]} \left\{ W_{i,j}(X^{(k)}) \right\}}{
  \left(
  \sum\limits_{i\in [m], j\in [p]}  -
  \sum\limits_{(i,j)\in \Omega_k}
  \right)
   W_{i,j}(X^{(k)}) \norm{a_{i}}^2  \norm{b_{j}}^2}
    + (1-\theta)  \\
   & \geq \theta
  \frac{ \bF{ A }^2\bF{ B }^2 }{
  \left(
  \sum\limits_{i\in [m], j\in [p]}  -
  \sum\limits_{(i,j)\in \Omega_k}
  \right)
   \norm{a_{i}}^2  \norm{b_{j}}^2}
    + (1-\theta),
  \end{align*}
  which yields the result in Proposition \ref{PmRGRK:gammak} immediately.  \qed
\end{proof}

\begin{proposition}
 Assume that $vec(X^{(0)})$ and $vec(X^{(1)})$ belong to the column space of $B\otimes A^T$. At the $k$th PmRGRK or NmRGRK iteration, the expectation of $W_{i,j}(X^{(k)})$ with respect to $(i,j)\in\Delta_k$ is bounded by
 \begin{align}\label{PmRGRK+eq:E_Wij(X)}
 \Ec \left[ W_{i,j}(X^{(k)}) \right] \geq \widetilde{\rho}_k \Ec \left[ \bF{ X^{(k)}-X^{\ast} }^2 \right],
\end{align}
where $\widetilde{\rho}_k =\gamma_k \widetilde{\rho}$ with $\widetilde{\rho} = \sigma_{r}^{2}(A) \sigma_{r}^{2}(B)/(\bF{ A }^2\bF{ B }^2)$.
\end{proposition}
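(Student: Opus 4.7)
The plan is to bound $\Ec[W_{i,j}(X^{(k)})]$ by chaining three inequalities. First, by the very definition of $\Delta_k$, every $(i,j)\in\Delta_k$ satisfies $W_{i,j}(X^{(k)}) \geq \delta_k \BF{R^{(k)}}$, and since $\sum_{(i,j)\in\Delta_k} p_{i_k,j_k} = 1$,
$$
  \Ec [ W_{i,j}(X^{(k)}) ] = \sum_{(i,j)\in\Delta_k} p_{i_k,j_k} W_{i,j}(X^{(k)}) \geq \delta_k \BF{R^{(k)}}.
$$
Proposition \ref{PmRGRK:gammak} then supplies $\delta_k \BF{A}\BF{B} \geq \gamma_k$, so $\delta_k \BF{R^{(k)}} \geq \gamma_k \BF{R^{(k)}}/(\BF{A}\BF{B})$. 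It therefore suffices to prove $\BF{R^{(k)}} \geq \sigma_r^2(A)\sigma_r^2(B) \BF{X^{(k)} - X^\ast}$; taking outer expectation and recalling $\widetilde{\rho}_k = \gamma_k \widetilde{\rho}$ then yields \eqref{PmRGRK+eq:E_Wij(X)}.

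For the singular-value step, consistency of \eqref{AXB=C} together with $X^\ast = A^\dagger C B^\dagger$ gives $AX^\ast B = C$, hence $R^{(k)} = A(X^\ast - X^{(k)}) B$. Using the standard identity $vec(AMB) = (B^T \otimes A)\, vec(M)$, I rewrite
$$
  \BF{R^{(k)}} = \| (B^T \otimes A)\, vec(X^{(k)} - X^\ast) \|^2,
$$
and invoke the fact that the smallest nonzero singular value of $B^T \otimes A$ is $\sigma_r(A)\sigma_r(B)$. The target inequality then follows provided $vec(X^{(k)} - X^\ast)$ lies in the row space of $B^T \otimes A$, equivalently the column space of $B \otimes A^T$.

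The main obstacle, and the step I would spend most care on, is this subspace invariance. I first note that $vec(X^\ast)$ itself lies in the column space of $B \otimes A^T$: since the columns of $A^\dagger$ span the column space of $A^T$ and the columns of $(B^\dagger)^T = (B^T)^\dagger$ span the column space of $B$, one may write $A^\dagger = A^T N$ and $(B^\dagger)^T = B M_1$, giving $vec(X^\ast) = ((B^\dagger)^T \otimes A^\dagger)\, vec(C) = (B \otimes A^T)(M_1 \otimes N)\, vec(C)$. Combined with the hypothesis on $vec(X^{(0)})$ and $vec(X^{(1)})$, the differences $vec(X^{(0)} - X^\ast)$ and $vec(X^{(1)} - X^\ast)$ both lie in the desired subspace. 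I then induct on $k$: the rank-one correction satisfies $vec(a_{i_k} b_{j_k}^T) = b_{j_k} \otimes a_{i_k}$, which is a column of $B \otimes A^T$, while the momentum contribution $\beta(X^{(k)} - X^{(k-1)})$ is a linear combination of two vectors already in the subspace, so the invariance propagates through the three-term PmRGRK recursion. The same bookkeeping carries through for NmRGRK by tracking the auxiliary sequence $Y^{(k)}$ in parallel and using $X^{(k+1)} = (1+\beta) Y^{(k+1)} - \beta Y^{(k)}$; the only minor subtlety there is reconciling the stated hypothesis on $X^{(0)}, X^{(1)}$ with the algorithm's initialization $X^{(0)}, Y^{(0)}$, which is handled by observing that $vec(Y^{(1)})$ inherits membership in the subspace from $vec(X^{(0)})$ and the rank-one direction.
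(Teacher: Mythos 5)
Your proof is correct and follows essentially the same route as the paper's: lower-bound the conditional expectation by $\delta_k\|R^{(k)}\|_F^2$ via the definition of $\Delta_k$, write $R^{(k)}=A(X^{(k)}-X^\ast)B$ by consistency, apply the smallest-nonzero-singular-value bound for $B^T\otimes A$, and invoke Proposition \ref{PmRGRK:gammak}. The only difference is that you spell out the subspace-invariance induction (base case $vec(X^\ast)\in\mathrm{col}(B\otimes A^T)$ and propagation through the rank-one and momentum terms), which the paper dismisses with a one-line ``by induction''; your added detail is accurate and welcome.
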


\begin{proof}
 By induction, it holds true for $vec(X^{(k)} - X^{\ast})$ being in the column space of $B\otimes A^T$. The expectation of $W_{i,j}(X^{(k)})$ with respect to $(i,j)\in\Delta_k$ conditioned on the first $k$ iterations in the PmRGRK or NmRGRK method is given by
\begin{align*}
\Ec_k \left[  W_{i,j}(X^{(k)}) \right]
  & = \sum_{(i,j)\in \Delta_k } p_{i,j}
  W_{i,j}(X^{(k)}) \notag \\
  & \geq \delta_k  \BF{R^{(k)}} \sum_{(i,j)\in \Delta_k } p_{i,j} \\
  & = \delta_k  \BF{R^{(k)}} = \delta_k  \BF{ A (X^{(k)}-X^{\ast}) B } \\
  & \geq \delta_k \bF{ A }^2\bF{ B }^2 \cdot \frac{\sigma_{r}^{2}(A) \sigma_{r}^{2}(B)}{\bF{ A }^2\bF{ B }^2}  \cdot
 \bF{ X^{(k)}-X^{\ast} }^2\\
  & \geq \widetilde{\rho}_k \bF{ X^{(k)}-X^{\ast} }^2,
 \end{align*}
 where the last line is from Proposition \ref{PmRGRK:gammak}. By taking full expectation on both sides of this inequality, the result in \eqref{PmRGRK+eq:E_Wij(X)} is obtained.
 \qed
\end{proof}

\section{Convergence analysis}\label{sec:PmRGRK+NmRGRK_convergence}

In this section, we will analyze the convergence of the PmRGRK and NmRGRK methods. First we present a lemma from \cite[Lemma 9]{20LR} which we will use in our convergence proofs.

\begin{lemma}\cite[Lemma 9]{20LR}\label{lemma:nonnegative+sequence}
Fix $F_1 = F_0 \geq 0$ and let $\{F_k\}_{k=0}^{\infty}$ be a sequence of nonnegative real numbers satisfying the relation
\begin{align*}
  F_{k+1} \leq  t_1 F_{k} + t_2 F_{k-1}
\end{align*}
for any $k\geq 1$, where $t_2 \geq 0$, $t_1 + t_2<1$, and at least one of the coefficients $t_1$ and $t_2$ is positive.
Then the sequence satisfies the relation
\begin{align*}
  F_{k+1} \leq  q_1^{k}(1 + q_2)F_0
\end{align*}
for all $k\geq 1$, where $q_1 = (t_1+\sqrt{t_1^2+4t_2})/2$ and $q_2 = q_1 -t_1\geq 0$.
\end{lemma}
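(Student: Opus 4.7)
The plan is to reduce the second-order recurrence to a first-order contraction via the characteristic polynomial $x^2 - t_1 x - t_2 = 0$, whose two roots are exactly $q_1 = (t_1 + \sqrt{t_1^2+4t_2})/2$ and $q_3 := (t_1 - \sqrt{t_1^2+4t_2})/2$. Two preliminary facts fall out of the hypotheses: from $t_2 \geq 0$ we get $\sqrt{t_1^2+4t_2} \geq |t_1|$, hence $q_3 \leq 0 \leq q_1$, with $q_1 > 0$ whenever at least one of $t_1, t_2$ is positive; from $t_1 + t_2 < 1$, by rearranging and squaring $\sqrt{t_1^2+4t_2} < 2 - t_1$, one obtains $q_1 < 1$, which supplies the geometric decay driving the whole estimate.

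The central step is the shift $G_k := F_{k+1} - q_3 F_k$. Using the Vieta relations $q_1 + q_3 = t_1$ and $q_1 q_3 = -t_2$, the hypothesis rewrites as
\begin{align*}
  F_{k+1} - q_3 F_k \;\leq\; q_1 \bigl( F_k - q_3 F_{k-1} \bigr),
\end{align*}
that is, $G_k \leq q_1 G_{k-1}$. Iterating this one-term contraction $k$ times and invoking the initial condition $F_1 = F_0$ gives $G_k \leq q_1^k G_0 = q_1^k (1 - q_3) F_0$, and the elementary identity $1 - q_3 = 1 + (q_1 - t_1) = 1 + q_2$ turns this into $F_{k+1} - q_3 F_k \leq q_1^k (1 + q_2) F_0$.

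Finally, since $q_3 \leq 0$ and $F_k \geq 0$, the term $-q_3 F_k$ is nonnegative, so it can be dropped from the left-hand side to yield $F_{k+1} \leq q_1^k(1+q_2) F_0$, which is the stated bound. The only genuinely nontrivial observation is spotting the right shift $G_k$ that diagonalizes the recursion; both the sign of $q_3$ and the bound $q_1 < 1$ reduce to one-line manipulations of the hypotheses, so I do not expect any technical obstacle beyond careful bookkeeping of signs in the last step.
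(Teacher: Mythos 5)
Your proof is correct. Note that the paper itself does not prove this lemma --- it is imported verbatim from \cite[Lemma 9]{20LR} --- so there is no in-paper argument to compare against; measured against the proof in that reference, your route is essentially the same one: since $q_3=t_1-q_1=-q_2$, your shifted sequence $G_k=F_{k+1}-q_3F_k$ is exactly the quantity $F_{k+1}+q_2F_k$ that is telescoped there, and all the auxiliary facts you use ($q_3\leq 0\leq q_1<1$, the Vieta relations, and dropping the nonnegative term $-q_3F_k$ at the end) are verified correctly, including the point that subtracting $q_3F_k$ from both sides avoids any sign issue when $t_1<0$.
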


\begin{theorem}\label{thm:CA-PmRGRK}
Let the matrix equation \eqref{AXB=C} be consistent. If the two initial guesses $X^{(0)}=X^{(1)}\in \mathbb{R}^{n\times n}$ with $vec(X^{(0)})$ being in the column space of $B\otimes A^T$, the step-size $0<\alpha<2$, and the momentum parameter $0<\beta<(\sqrt{\tau_1^2 + 12(1-\tau_2)}-\tau_1)/6$ with $\tau_1=4+\alpha-\alpha \widetilde{\rho}$ and $\tau_2=\alpha(2-\alpha)\widetilde{\rho}$, then the iteration sequence $\{X^{(k)}\}_{k=0}^{\infty}$, generated by Algorithm \ref{alg:PmRGRK}, satisfies
\begin{align}\label{thm:CA-PmRGRK+eq2}
  \Ec\left[ \BF{ X^{(k+1)} - X^{\ast} } \right]\leq
  \left( \frac{\sqrt{\gamma_1^2+4\gamma_2} + \gamma_1}{2}  \right)^k
  \left( \frac{\sqrt{\gamma_1^2+4\gamma_2} - \gamma_1}{2}  \right)
  \BF{ X^{(0)} - X^{\ast} },
\end{align}
where $\gamma_{1} = (1+3\beta+\beta^2) - (2\alpha + \alpha\beta - \alpha^2)\widetilde{\rho}$  and
$\gamma_{2} = 2\beta^2+(1+\alpha)\beta$.
\end{theorem}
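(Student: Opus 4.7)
My plan is to convert the three-term PmRGRK recurrence into a two-term recurrence for the expected squared Frobenius error and then invoke Lemma \ref{lemma:nonnegative+sequence}. Set $e^{(k)} = X^{(k)} - X^\ast$, $g^{(k)} = \frac{R^{(k)}_{i_k,j_k}}{\norm{a_{i_k}}^2\norm{b_{j_k}}^2}a_{i_k}b_{j_k}^T$, and $F_k = \Ec[\BF{e^{(k)}}]$, so that Step 4 of Algorithm \ref{alg:PmRGRK} reads $e^{(k+1)} = (1+\beta)e^{(k)} - \beta e^{(k-1)} + \alpha g^{(k)}$. Consistency $AX^\ast B = C$ supplies the two key identities $\BF{g^{(k)}} = W_{i_k,j_k}(X^{(k)})$ and $\langle e^{(k)}, g^{(k)}\rangle_F = -W_{i_k,j_k}(X^{(k)})$, since $a_{i_k}^T e^{(k)} b_{j_k} = -R^{(k)}_{i_k,j_k}$.

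Expanding $\BF{e^{(k+1)}}$ as the squared norm of a three-term sum and substituting these identities leaves two residual cross inner products, namely $\langle e^{(k)}, e^{(k-1)}\rangle_F$ and $\langle g^{(k)}, e^{(k-1)}\rangle_F$. Each I will control by Young's inequality $2|\langle u,v\rangle_F| \leq s\,\BF{u} + s^{-1}\BF{v}$ with weights $s$ tuned so that, after aggregation, the coefficients of $\BF{e^{(k)}}$, $\BF{e^{(k-1)}}$, and $W_{i_k,j_k}(X^{(k)})$ land at $1+3\beta+\beta^2$, $2\beta^2+(1+\alpha)\beta$, and $-(2\alpha+\alpha\beta-\alpha^2)$ respectively. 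A short induction shows $vec(e^{(k)})$ stays in the column space of $B\otimes A^T$ at every step (each increment is the rank-one matrix $a_{i_k}b_{j_k}^T$), so taking the conditional expectation $\Ec_k[\cdot]$ over $(i_k,j_k)\in\Delta_k$ and invoking Proposition \ref{PmRGRK+eq:E_Wij(X)} together with $\gamma_k\geq 1$ from Proposition \ref{PmRGRK:gammak} lets me replace $\Ec_k[W_{i_k,j_k}(X^{(k)})]$ by $\widetilde{\rho}\,\BF{e^{(k)}}$, producing $F_{k+1}\leq \gamma_1 F_k + \gamma_2 F_{k-1}$.

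It remains to check the hypotheses of Lemma \ref{lemma:nonnegative+sequence}. Non-negativity $\gamma_2 \geq 0$ follows from $\beta \geq 0$, while $\gamma_1 + \gamma_2 < 1$ reduces after simplification to a quadratic inequality in $\beta$ whose positive root is precisely the upper bound stated in the theorem (which is why the complicated-looking expression $(\sqrt{\tau_1^2+12(1-\tau_2)}-\tau_1)/6$ appears). The initial condition $X^{(0)} = X^{(1)}$ gives $F_0 = F_1$, so Lemma \ref{lemma:nonnegative+sequence} applies with $t_1 = \gamma_1$, $t_2 = \gamma_2$ and yields $F_{k+1} \leq q_1^{k}(1+q_2)F_0$ with $q_1 = (\gamma_1+\sqrt{\gamma_1^2+4\gamma_2})/2$ and $q_2 = q_1 - \gamma_1 = (\sqrt{\gamma_1^2+4\gamma_2}-\gamma_1)/2$, which is exactly \eqref{thm:CA-PmRGRK+eq2}.

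The main obstacle is the algebraic calibration of the Young's-inequality weights: two split parameters must jointly produce three prescribed coefficients, so a naive symmetric split (e.g.\ $s=1$ in both cross terms) gives strictly weaker constants. The cleanest route appears to be to bound the combined cross term $-2\beta\langle (1+\beta)e^{(k)}+\alpha g^{(k)}, e^{(k-1)}\rangle_F$ with a single, coupled weight rather than treating the two inner products independently, so that the stated $\gamma_1$ and $\gamma_2$ emerge without slack. Everything else---consistency preservation, column-space invariance of $vec(e^{(k)})$, and reducing $\gamma_1+\gamma_2<1$ to the stated quadratic---is essentially routine bookkeeping.
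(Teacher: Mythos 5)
Your proposal follows essentially the same route as the paper's proof: the same expansion of $\BF{X^{(k+1)}-X^{\ast}}$ into a leading Kaczmarz term plus momentum cross terms, the same two consistency identities $\BF{g^{(k)}}=-\langle e^{(k)},g^{(k)}\rangle_F=W_{i_k,j_k}(X^{(k)})$, Young-type bounds on the cross terms, the expectation bound \eqref{PmRGRK+eq:E_Wij(X)} with $\gamma_k\geq 1$, and finally Lemma \ref{lemma:nonnegative+sequence} with $t_1=\gamma_1$, $t_2=\gamma_2$. The only point where you anticipate a difficulty---calibrating the Young weights to hit the stated coefficients---is handled in the paper simply by the unit-weight estimates $2\langle U,V\rangle_F\leq\BF{U}+\BF{V}$ and $\BF{U-V}\leq 2\BF{U}+2\BF{V}$ applied term by term, so no coupled-weight optimization is needed.
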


\begin{proof}
For the sake of simplicity, we define the $k$th error matrix as $H^{(k)}=X^{(k)}-X^{\ast}$ for $k=0,1,2,\cdots$. We first divide   $\BF{H^{(k+1)}}$ into three parts, i.e.,
\begin{align}\label{eq1:PmRGRK_x_k+1}
  \BF{ H^{(k+1)} } = s_{k,1}  + s_{k,2} + s_{k,3},
\end{align}
where $s_{k,1}$,  $s_{k,2}$,  and  $s_{k,3}$ are respectively defined by
\begin{align*}
\left \{
\begin{array}{l}
   s_{k,1}  =  \BF{ H^{(k)} + \alpha  V_{i_k,j_k}(X^{(k)})  a_{i_k} b_{j_k}^T  },  \vspace{1ex}\\
   s_{k,2}  =  2\beta\left\langle H^{(k)} + \alpha  V_{i_k,j_k}(X^{(k)})  a_{i_k} b_{j_k}^T,~~ X^{(k)} - X^{(k-1)} \right\rangle_F,  \vspace{1ex} \\
   s_{k,3} = \beta^2 \BF{ X^{(k)} - X^{(k-1)} },
\end{array}
\right.
\end{align*}
with
\begin{align*}
  V_{i_k,j_k}(X^{(k)}) = \frac{C_{i_k,j_k} - a_{i_k}^T X^{(k)} b_{j_k}}{\norm{a_{i_k}}^2  \norm{b_{j_k}}^2}.
\end{align*}
We proceed to analyze them individually.

According to the fact that
\begin{align*}
 \BF{V_{i,j}(X^{(k)})  a_{i} b_{j}^T }
 = -\left\langle H^{(k)},~~ V_{i,j}(X^{(k)})  a_{i} b_{j}^T \right\rangle_F
 = W_{i,j}(X^{(k)}),
\end{align*}
we have
\begin{align}\label{eq:PmRGRK_sk1}
  s_{k,1}
  & = \BF{H^{(k)}}
      + 2\alpha \left\langle H^{(k)},~~ V_{i_k,j_k}(X^{(k)})  a_{i_k} b_{j_k}^T \right\rangle_F
      + \alpha^2 \BF{V_{i_k,j_k}(X^{(k)})  a_{i_k} b_{j_k}^T }\notag\\
  & = \BF{ H^{(k)} } + (\alpha^2 - 2\alpha) W_{i_k,j_k}(X^{(k)}).
\end{align}
Define two auxiliary variables
\begin{align*}
  s_{k,2}^{(1)}
  & = 2\beta\left\langle H^{(k)}, H^{(k)} \right\rangle_F  +
      2\beta\left\langle H^{(k)}, - H^{(k-1)} \right\rangle_F \\
  &\leq 2\beta\BF{H^{(k)}} +  \beta (\BF{H^{(k)}} + \BF{H^{(k-1)}})\\
  & = 3\beta\BF{H^{(k)}} + \beta \BF{ H^{(k-1)} }
\end{align*}
and
\begin{align*}
  s_{k,2}^{(2)}
  & = 2\alpha\beta\left\langle V_{i,j}(X^{(k)})  a_{i} b_{j}^T, H^{(k)} \right\rangle_F +
      2\alpha\beta\left\langle V_{i,j}(X^{(k)})  a_{i} b_{j}^T,  - H^{(k-1)} \right\rangle_F\\
  & =-2\alpha\beta W_{i_k,j_k}(X^{(k)})   +
      2\alpha\beta\left\langle V_{i,j}(X^{(k)})  a_{i} b_{j}^T, -H^{(k-1)} \right\rangle_F\\
  &\leq-2\alpha\beta W_{i_k,j_k}(X^{(k)})   +
       \alpha\beta (W_{i_k,j_k}(X^{(k)}) + \BF{H^{(k-1)}})\\
  & =  \alpha\beta \BF{H^{(k-1)}} -  \alpha\beta W_{i_k,j_k}(X^{(k)}).
\end{align*}
It follows that
\begin{align}\label{eq:PmRGRK_sk2}
  s_{k,2}
& = 2\beta\left\langle H^{(k)},  X^{(k)} - X^{(k-1)} \right\rangle_F +
    2\alpha\beta\left\langle V_{i,j}(X^{(k)})  a_{i} b_{j}^T, X^{(k)} - X^{(k-1)} \right\rangle_F\notag\\
& = 2\beta\left\langle H^{(k)}, H^{(k)} - H^{(k-1)} \right\rangle_F +
    2\alpha\beta\left\langle V_{i,j}(X^{(k)})  a_{i} b_{j}^T,  H^{(k)} - H^{(k-1)} \right\rangle_F\notag\\
&=s_{k,2}^{(1)}  +   s_{k,2}^{(2)}\notag\\
& \leq 3\beta\BF{ H^{(k)} } + (1+\alpha)\beta \BF{ H^{(k-1)} } -  \alpha\beta W_{i_k,j_k}(X^{(k)}).
\end{align}
Using the inequality $\BF{X-Y}\leq 2(\BF{X-Z} + \BF{Y-Z})$ for any matrices $X$, $Y$, and $Z$ with compatible dimension, it holds that
\begin{align}\label{eq:PmRGRK_sk3}
 s_{k,3} \leq 2\beta^2 \BF{ H^{(k)} } + 2\beta^2 \BF{ H^{(k-1)} }.
\end{align}

Combining formulas \eqref{eq:PmRGRK_sk1}, \eqref{eq:PmRGRK_sk2}, and \eqref{eq:PmRGRK_sk3}, it indicates that
\begin{align*}
  \BF{ H^{(k+1)}}
  &\leq (1+3\beta+\beta^2)\BF{ H^{(k)} }
  + (2\alpha + \alpha\beta - \alpha^2) (-W_{i_k,j_k}(X^{(k)}))\\
  &\quad + (2\beta^2+(1+\alpha)\beta)\BF{ H^{(k-1)} }.
\end{align*}
By first taking expectation with respect to $(i_k,j_k)\in\Delta_k$, we obtain
\begin{align*}
  \Ec_k\left[\BF{ H^{(k+1)}} \right]
  &\leq (1+3\beta+\beta^2)\Ec_k\left[\BF{ H^{(k)} } \right]
  + (2\alpha + \alpha\beta - \alpha^2) \Ec_k\left[-W_{i_k,j_k}(X^{(k)})\right]\\
  &\quad + (2\beta^2+(1+\alpha)\beta)\Ec_k\left[\BF{ H^{(k-1)} }\right]\\
  &\leq (1+3\beta+\beta^2)\Ec_k\left[\BF{ H^{(k)} } \right]
  - (2\alpha + \alpha\beta - \alpha^2) \widetilde{\rho}_k \bF{ H^{(k)} }^2\\
  &\quad + (2\beta^2+(1+\alpha)\beta)\Ec_k\left[\BF{ H^{(k-1)} }\right],
\end{align*}
where the second inequality is from formula \eqref{PmRGRK+eq:E_Wij(X)}.
By taking expectation again, we get the three-term recurrence relation
\begin{align*}
  \Ec\left[ \BF{ X^{(k+1)} - X^{\ast} } \right]\leq \gamma_{1} \Ec\left[ \BF{ X^{(k)} - X^{\ast} } \right] + \gamma_{2} \Ec\left[ \BF{ X^{(k-1)} - X^{\ast} } \right].
\end{align*}
Since $0<\alpha<2$ and $0<\beta<(\sqrt{\tau_1^2 + 12(1-\tau_2)}-\tau_1)/6$, we have $\gamma_{2}>0$ and
\begin{align*}
\gamma_{1} + \gamma_{2} = 3 \beta^2 + \tau_1 \beta + \tau_2 <1.
\end{align*}
Then, we obtain the convergence result in Theorem \ref{thm:CA-PmRGRK} from Lemma \ref{lemma:nonnegative+sequence}.
\qed
\end{proof}

\begin{remark}
  We note that the convergence factor in Theorem \ref{thm:CA-PmRGRK} is less than one. The inequality follows directly from the condition $\gamma_{1}+\gamma_{2}<1$. This is because,
 \begin{align*}
   \frac{\sqrt{\gamma_1^2+4\gamma_2} + \gamma_1}{2} -1
    = \frac{(\gamma_1 -2) + \sqrt{(\gamma_1-2)^2+4(\gamma_1+\gamma_2)-4}}{2}<0.
 \end{align*}
\end{remark}

Now, we turn to analyze the convergence of the NmRGRK method.

\begin{theorem}\label{thm:CA-NmRGRK}
Let the matrix equation \eqref{AXB=C} be consistent. If the two initial guesses $X^{(0)}=Y^{(0)}\in \mathbb{R}^{n\times n}$ with $vec(X^{(0)})$ being in the column space of $B\otimes A^T$, the step-size $0<\alpha<2$, and the momentum parameter $0<\beta<(\sqrt{2\tau_3 -1}-1)/2$ with $\tau_3 = 1/(2 + 2\alpha(2-\alpha)\widetilde{\rho})$, then the iteration sequence $\{X^{(k)}\}_{k=0}^{\infty}$, generated by Algorithm \ref{alg:NmRGRK}, satisfies
\begin{align}\label{thm:CA-NmRGRK+eq2}
  \Ec\left[ \BF{ X^{(k+1)} - X^{\ast} } \right]\leq
  \left( \frac{\sqrt{\gamma_3^2+4\gamma_4} + \gamma_3}{2}  \right)^k
  \left( \frac{\sqrt{\gamma_3^2+4\gamma_4} - \gamma_3}{2}  \right)
  \BF{ X^{(0)} - X^{\ast} },
\end{align}
where $\gamma_{3} = 2(1+\beta)^2 (1 + (\alpha^2 - 2\alpha) \widetilde{\rho})$  and
      $\gamma_{4} = 2\beta^2 (1 + (\alpha^2 - 2\alpha) \widetilde{\rho})$.
\end{theorem}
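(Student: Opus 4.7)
The plan is to derive a two-term recurrence for $\Ec[\BF{X^{(k+1)}-X^\ast}]$ in the spirit of Theorem \ref{thm:CA-PmRGRK} and then invoke Lemma \ref{lemma:nonnegative+sequence}. To set up, I would introduce $H^{(k)} = X^{(k)} - X^\ast$ and $G^{(k)} = Y^{(k)} - X^\ast$, and verify by induction from $X^{(0)} = Y^{(0)}$ that $vec(H^{(k)})$ and $vec(G^{(k)})$ both remain in the column space of $B\otimes A^T$, so that the lower bound \eqref{PmRGRK+eq:E_Wij(X)} is available at every step.

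The first main step is to unroll the Nesterov update $X^{(k+1)} = (1+\beta)Y^{(k+1)} - \beta Y^{(k)}$ into the error identity $H^{(k+1)} = (1+\beta)G^{(k+1)} - \beta G^{(k)}$ and apply the elementary bound $\BF{U-V}\leq 2\BF{U} + 2\BF{V}$ to obtain the mixing inequality $\BF{H^{(k+1)}}\leq 2(1+\beta)^2 \BF{G^{(k+1)}} + 2\beta^2 \BF{G^{(k)}}$. The second step is to analyze the inner Kaczmarz sub-step $Y^{(k+1)} = X^{(k)} + \alpha V_{i_k,j_k}(X^{(k)}) a_{i_k} b_{j_k}^T$: using the identities $\BF{V_{i,j}(X^{(k)}) a_i b_j^T} = W_{i,j}(X^{(k)}) = -\langle H^{(k)}, V_{i,j}(X^{(k)}) a_i b_j^T\rangle_F$, the squared error expands exactly as $\BF{G^{(k+1)}} = \BF{H^{(k)}} + (\alpha^2 - 2\alpha)\, W_{i_k,j_k}(X^{(k)})$. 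Taking conditional expectation over $(i_k,j_k)\in\Delta_k$ and using $\alpha^2 - 2\alpha < 0$ together with \eqref{PmRGRK+eq:E_Wij(X)} and $\widetilde{\rho}_k\geq \widetilde{\rho}$ yields the per-step contraction $\Ec_k[\BF{G^{(k+1)}}] \leq (1 + (\alpha^2-2\alpha)\widetilde{\rho})\BF{H^{(k)}}$; an index shift gives the analogous bound on $\Ec[\BF{G^{(k)}}]$ in terms of $\Ec[\BF{H^{(k-1)}}]$.

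Chaining these pieces and taking full expectation delivers the two-term recurrence $\Ec[\BF{H^{(k+1)}}]\leq \gamma_3 \Ec[\BF{H^{(k)}}] + \gamma_4 \Ec[\BF{H^{(k-1)}}]$ with exactly the $\gamma_3,\gamma_4$ stated in the theorem. The final piece is to verify $\gamma_3+\gamma_4<1$ under the hypothesized bound on $\beta$, which after factoring out the common term $1-\alpha(2-\alpha)\widetilde{\rho}$ reduces to a quadratic inequality in $\beta$; once this holds, Lemma \ref{lemma:nonnegative+sequence} with $t_1 = \gamma_3$ and $t_2 = \gamma_4$ yields the geometric bound \eqref{thm:CA-NmRGRK+eq2}.

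I expect two delicate points. First, the base case must be reconciled with the hypothesis $F_0 = F_1$ of Lemma \ref{lemma:nonnegative+sequence}: because the recurrence naturally holds only for $k\geq 1$, the identity $\BF{G^{(0)}} = \BF{H^{(0)}}$ afforded by the initialization $X^{(0)}=Y^{(0)}$ is what allows the $k=0$ mixing estimate to play the role of the initial-equality condition. Second, the explicit algebraic translation of $\gamma_3+\gamma_4<1$ into the stated bound on $\beta$ in terms of $\tau_3$ is the most error-prone step, since it requires careful tracking of the sign of $\alpha^2-2\alpha$ and of the expansion $2(1+\beta)^2 + 2\beta^2 = 2 + 4\beta + 4\beta^2$ when inverting the quadratic.
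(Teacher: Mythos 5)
Your proposal follows the paper's proof essentially verbatim: your mixing inequality $\BF{H^{(k+1)}}\leq 2(1+\beta)^2\BF{G^{(k+1)}}+2\beta^2\BF{G^{(k)}}$ is exactly the paper's split into $2(s_{k,4}+s_{k,5})$, the exact expansion of the inner Kaczmarz sub-step combined with the bound \eqref{PmRGRK+eq:E_Wij(X)} and $\widetilde{\rho}_k\geq\widetilde{\rho}$ yields the same two-term recurrence with the stated $\gamma_3,\gamma_4$, and Lemma \ref{lemma:nonnegative+sequence} closes the argument just as in the paper. The approach and all key steps coincide, so there is nothing materially different to compare.
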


\begin{proof}
  The proof is almost the same as the proof of Theorem \ref{thm:CA-PmRGRK}, so we skip the repeated parts.

  We first write the $(k+1)$th NmRGRK squared error as follows.
  \begin{align}\label{eq:NmRGRK_sk4+5}
  \BF{ H^{(k+1)} } \leq 2 (s_{k,4}  + s_{k,5}),
\end{align}
where $s_{k,4}$ and  $s_{k,5}$ are respectively defined by
\begin{align*}
\left \{
\begin{array}{l}
   s_{k,4}  =  (1+\beta)^2 \BF{ Y^{(k+1)} - X^{\ast} },  \vspace{1ex}\\
   s_{k,5}  = \beta^2 \BF{ Y^{(k)} - X^{\ast} }.
\end{array}
\right.
\end{align*}
These two terms will be analyzed individually.

For the first term, we have
\begin{align*}
s_{k,4}
& =  (1+\beta)^2 \BF{ Y^{(k+1)} - X^{\ast} }\\
& =  (1+\beta)^2 \BF{ H^{(k)} + \alpha  V_{i_k,j_k}(X^{(k)})  a_{i_k} b_{j_k}^T  } \\
& =  (1+\beta)^2 \BF{ H^{(k)} } + (\alpha^2 - 2\alpha)(1+\beta)^2 W_{i_k,j_k}(X^{(k)}).
\end{align*}
Based on Proposition \ref{PmRGRK+eq:E_Wij(X)}, it yields that for any index pair $(i_k,j_k)\in\Delta_k$,
\begin{align*}
 \Ec \left[ W_{i,j}(X^{(k)}) \right] \geq \gamma_k \widetilde{\rho} \Ec \left[ \bF{ H^{(k)} }^2 \right] \geq  \widetilde{\rho} \Ec \left[ \bF{ H^{(k)} }^2 \right]
\end{align*}
By taking expectation for $s_{k,4}$, we have
\begin{align}\label{eq:NmRGRK_sk4}
\Ec \left[ s_{k,4} \right] \leq
(1+\beta)^2 (1 + (\alpha^2 - 2\alpha) \widetilde{\rho}) \Ec \left[ \bF{ H^{(k)} }^2 \right].
\end{align}
It is a similar story to show that for any $(i_{k-1},j_{k-1})\in\Delta_{k-1}$,
\begin{align*}
s_{k,5}
 =  \beta^2 \BF{ H^{(k-1)} } + (\alpha^2 - 2\alpha)\beta^2 W_{i_{k-1},j_{k-1}}(X^{(k-1)})
\end{align*}
and
\begin{align}\label{eq:NmRGRK_sk5}
\Ec \left[ s_{k,5} \right] \leq
\beta^2 (1 + (\alpha^2 - 2\alpha) \widetilde{\rho}) \Ec \left[ \bF{ H^{(k-1)} }^2 \right]
\end{align}
for $k=1,2,3,\cdots$.

Combining formulas \eqref{eq:NmRGRK_sk4+5}, \eqref{eq:NmRGRK_sk4}, and \eqref{eq:NmRGRK_sk5}, it leads to
\begin{align*}
\Ec \left[ \BF{ H^{(k+1)} } \right] \leq
\gamma_{3} \Ec \left[ \BF{ H^{(k)} } \right] + \gamma_{4} \Ec \left[ \BF{ H^{(k-1)} } \right].
\end{align*}
Since $0<\alpha<2$ and $0<\beta<(\sqrt{2\tau_3 -1}-1)/2$, we have $\gamma_{4}>0$ and
\begin{align*}
\gamma_{3} + \gamma_{4} - 1 = \frac{1}{\tau_3}(2 \beta^2 + 2 \beta + 1 - \tau_3) < 0.
\end{align*}
Then, the convergence result in Theorem \ref{thm:CA-NmRGRK} is obtained by applying Lemma \ref{lemma:nonnegative+sequence}.
\qed
\end{proof}

\section{Experimental results}\label{sec:PmRGRK+NmRGRK_numer}

In this section, we implement the ME-RGRK method \cite{22WLZ} and its two momentum variants, including PmRGRK and NmRGRK, and show that PmRGRK and NmRGRK are numerically advantageous over ME-RGRK in terms of the relative residual norm (RRN), the number of iteration steps (IT), and the computing time in seconds (CPU), where RRN is defined by
 \begin{align*}
  {\rm RRN} =\frac{\bF{ R^{(k)}} }{ \bF{ R^{(0)}}} = \frac{\bF{     C-AX^{(k)}B   } }{ \bF{ C-AX^{(0)}B   }}
 \end{align*}
and CPU is realized by applying MATLAB built-in function, e.g., {\sf tic-toc}. Note that the CPU and IT are the arithmetical averages of the elapsed CPU times and the required iteration steps concerning $20$ times repeated runs of the corresponding method, respectively, because of the randomness of the methods, where IT is taken as an integer. We also report the speed-up (SU) of PmRGRK or NmRGRK against ME-RGRK, which is defined by
\begin{align*}
 {\rm SU} = \frac{{\rm CPU~of~ME-RGRK}}{{\rm CPU~of~PmRGRK~or~NmRGRK}}.
\end{align*}
The relaxation parameter in these three methods is selected  by an exhaustive strategy, e.g., $\theta=0.5$, $0.7$, and $0.9$. It is an experimental finding that the parameter pairs $(\alpha,\beta)=(0.9,0.3)$ and $(0.8,0.5)$ are appropriate for the PmRGRK and NmRGRK methods, respectively, which can bring about a satisfactory convergence rate. Not especially specified, we will adopt this parameter selection approach for PmRGRK and NmRGRK in the following numerical test. All numerical tests are performed on a Founder desktop PC with Intel(R) Core(TM) i5-7500 CPU 3.40 GHz.

\subsection{Synthetic data} The following coefficient matrix is generated from synthetic data.

\begin{example}\label{Example_1}
For the given $m$, $n$, and $p$, we consider two dense coefficient matrices, which is randomly generated by the MALTAB built-in function, e.g., $A={\sf randn}(m,n)$ and $B={\sf randn}(n,p)$.
\end{example}

\begin{example}\label{Example_2}
Consider the coefficient matrices $A={\sf sprandn}(m,n,1/n)$ and $B={\sf randn}(n,p)$, where {\sf sprandn} is a MALTAB built-in function and creates a random $m$-by-$n$ sparse matrix with approximately $m$ normally distributed nonzero entries.
\end{example}

\begin{example}\label{Example_3}
In this example, we consider the coefficient matrices with block structure, generated by $A=[A_1~A_1 ; A_1~A_1]$ with $A_1 = {\sf rand}(m/2,n/2)$ and $B={\sf randn}(n,p)$.
\end{example}

\begin{example}\label{Example_4}
As in Du et al. \cite{20DSS}, for the given $x_1$, $x_2$, and $r_1$, we construct a dense matrix $R$ by $R=UDV^T$, where $U\in \mathbb{R}^{x_1\times r_1}$, $D\in \mathbb{R}^{r_1\times r_1}$, and $V\in \mathbb{R}^{x_2\times r_1}$. Using MATLAB colon notation, these matrices are generated by
  $[U,\sim]={\sf qr}({\sf randn}(x_1,r_1),0)$,
  $[V,\sim]= {\sf qr}({\sf randn}(x_2,r_1),0)$, and
  $D={\sf diag}(1 + 2*{\sf rand}(r_1,1))$.
The output matrix is abbreviated as
$ R =   {\sf{Smatrix}} (x_1, x_2, r)$.
 In this example, we initialize the coefficient matrices in \eqref{AXB=C} as
$A =  {\sf{Smatrix}} (m, n, r_1)$
 and
$ B =  {\sf{Smatrix}} (n, p, r_2)$.
\end{example}

In this subsection, we will solve the matrix equation \eqref{AXB=C} with the coefficient matrices $A$ and $B$ from Examples \ref{Example_1} -- \ref{Example_4}. One of the solution vectors $X^\ast \in \mathbb{R}^{n\times n}$ is generated by using the MATLAB function {\sf randn(n)}  and the right-hand side $C \in \mathbb{R}^{m\times p}$ is taken to be $AX^\ast B$. Our implementations are respectively started from
$X^{(0)} =\textsc{0}$, $X^{(0)}=X^{(1)}=\textsc{0}$, and $X^{(0)}=Y^{(0)}=\textsc{0}$ for the ME-RGRK, PmRGRK, and NmRGRK methods. All computations are terminated once  ${\rm RRN} \leq 1\times 10^{-5}$, or the number of iteration steps exceeds $1\times10^5$.

For the randomly generated coefficient matrices in Examples \ref{Example_1} -- \ref{Example_2}, which are of {\it full rank},
Tables \ref{tab:Table-1} -- \ref{tab:Table-2} record the detailed outcomes of the iteration counts and the computing times for ME-RGRK, PmRGRK, and NmRGRK methods. From these tables, we see that the PmRGRK and NmRGRK methods perform more efficiently than the ME-RGRK method in terms of both iteration step and CPU time with significant speed-up in all cases. Exactly, the SU is at least $1.77$ (resp., $1.81$)  and at most $2.14$ (resp., $2.69$) for the PmRGRK (resp., NmRGRK) method. It also can be concluded from the tables that for the PmRGRK and NmRGRK methods, the number of iteration steps is decreasing rapidly, while the CPU time is increasing gradually when $n$ and $p$ are fixed but $m$ is grown.

For the coefficient matrices in Examples \ref{Example_3} -- \ref{Example_4}, which are {\it rank-deficient}, we list the number of iteration steps and the computing time for the ME-RGRK, PmRGRK, and NmRGRK methods in Tables \ref{tab:Table-3} -- \ref{tab:Table-4-4}. The results in these two tables show numerical phenomena similar to the above. The PmRGRK and NmRGRK methods can always successfully compute an approximate solution to the matrix equation \eqref{AXB=C}. The tables reveal that the iteration count and computing time of the PmRGRK and NmRGRK methods are considerably smaller than those of the ME-RGRK method, with the largest speed-ups being $1.84$ (resp., $2.33$) in Example \ref{Example_3} and $1.91$ (resp., $2.40$) in Example \ref{Example_4} for the PmRGRK (resp., NmRGRK) method. Given the fixed $n$ and $p$, and the increasing $m$, the number of iteration steps for the PmRGRK and NmRGRK methods is decreasing speedily, while the computing time is growing steadily.

\begin{table}[!htb]
	\caption{IT and CPU for the ME-RGRK, PmRGRK, and NmRGRK methods with different $m$, $n$, and $p$ in Example \ref{Example_1}}
	\begin{center}
		\begin{tabular}{|c |c| c| c| c| c|}
			\hline
			\multicolumn{2}{|c|}{$(m,n,p)$} & (400,50,100) & (600,50,100) & (800,50,100)  & (1000,50,100)  \\ \hline
			\multicolumn{6}{|l|}{$\theta=0.5$} \\  \hline
			\multirow{2}*{ME-RGRK} &IT  &36151 & 30834 & 28469 & 23826 \\ \cline{2-6}
			\multirow{2}*{} &CPU  &19.53  & 21.43 & 24.95 & 28.16 \\ \cline{2-6}
            \hline
			\multirow{2}*{PmRGRK} &IT  &24674  &21548  & 19964 & 17260 \\ \cline{2-6}
			\multirow{2}*{} &CPU  &9.75  & 11.20 & 13.67 & 15.86\\ \cline{2-6}
            \hline
            \multicolumn{2}{|c|}{SU} &2.00  &1.91  & 1.83 & 1.77  \\ \hline
            \multirow{2}*{NmRGRK} &IT  &18733  &17166  & 16273 & 14228 \\ \cline{2-6}
			\multirow{2}*{} &CPU  &8.04  & 9.95 & 12.52 & 15.53\\ \cline{2-6}
            \hline
            \multicolumn{2}{|c|}{SU} &2.43  &2.15  & 2.00 & 1.81  \\ \hline
            \multicolumn{6}{|l|}{$\theta=0.7$} \\  \hline
			\multirow{2}*{ME-RGRK} &IT  &35369 & 26347 & 22960 & 22519 \\ \cline{2-6}
			\multirow{2}*{} &CPU  &18.70  & 17.82 & 19.18 & 21.61 \\ \cline{2-6}
            \hline
			\multirow{2}*{PmRGRK} &IT  &23423  &17806  & 15382 & 15430 \\ \cline{2-6}
			\multirow{2}*{} &CPU  &9.15  & 8.90 & 9.89 & 11.67\\ \cline{2-6}
            \hline
            \multicolumn{2}{|c|}{SU} &2.04  &2.00  & 1.94 & 1.85  \\ \hline
            \multirow{2}*{NmRGRK} &IT  &17178  &13448  & 11916 & 12393 \\ \cline{2-6}
			\multirow{2}*{} &CPU  &7.20  & 7.57 & 8.64 & 11.08\\ \cline{2-6}
            \hline
            \multicolumn{2}{|c|}{SU} &2.60  &2.35  & 2.22 & 1.95  \\ \hline
            \multicolumn{6}{|l|}{$\theta=0.9$} \\  \hline
			\multirow{2}*{ME-RGRK} &IT  &34230  & 25954 & 21947 & 21214 \\ \cline{2-6}
			\multirow{2}*{} &CPU  &16.86  & 16.21 & 17.21 & 19.02 \\ \cline{2-6}
            \hline
			\multirow{2}*{PmRGRK} &IT  &22970  &17517 & 14955 & 14584 \\ \cline{2-6}
			\multirow{2}*{} &CPU  &8.48 & 8.30 &9.26 & 10.53\\ \cline{2-6}
            \hline
            \multicolumn{2}{|c|}{SU} &1.99  &1.95  & 1.86 & 1.81  \\ \hline
            \multirow{2}*{NmRGRK} &IT  &16575  &13070  & 11162 & 11210 \\ \cline{2-6}
			\multirow{2}*{} &CPU  &6.59 & 6.85 & 7.80 & 9.63\\ \cline{2-6}
            \hline
            \multicolumn{2}{|c|}{SU} &2.56  &2.37  & 2.21 & 1.97  \\ \hline
		\end{tabular}
	\end{center}
	\label{tab:Table-1}
\end{table}

\begin{table}[!htb]
	\caption{IT and CPU for the ME-RGRK, PmRGRK, and NmRGRK methods with different $m$, $n$, and $p$ in Example \ref{Example_2}}
	\begin{center}
		\begin{tabular}{|c |c| c| c| c| c|}
			\hline
 			\multicolumn{2}{|c|}{$(m,n,p)$} & (400,50,100) & (600,50,100) & (800,50,100)  & (1000,50,100)  \\ \hline
			\multicolumn{6}{|l|}{$\theta=0.5$} \\  \hline
			\multirow{2}*{ME-RGRK} &IT  &46885  & 43455 & 34653 & 34119 \\ \cline{2-6}
			\multirow{2}*{} &CPU  &29.88  & 36.70 & 36.26 & 42.62 \\ \cline{2-6}
            \hline
			\multirow{2}*{PmRGRK} &IT  &30737  &27831  & 22456 & 22076 \\ \cline{2-6}
			\multirow{2}*{} &CPU  &13.96  & 16.95 & 18.16 & 21.62\\ \cline{2-6}
            \hline
            \multicolumn{2}{|c|}{SU} &2.14  &2.17  & 2.00 & 1.97  \\ \hline
            \multirow{2}*{NmRGRK} &IT  &23154  &20982  & 17662 & 17275 \\ \cline{2-6}
			\multirow{2}*{} &CPU  &11.12  & 13.92 & 15.47 & 19.17\\ \cline{2-6}
            \hline
            \multicolumn{2}{|c|}{SU} &2.69  &2.64  & 2.34 & 2.22  \\ \hline
            \multicolumn{6}{|l|}{$\theta=0.7$} \\  \hline
			\multirow{2}*{ME-RGRK} &IT  &41276 & 37349 & 33358 & 33344 \\ \cline{2-6}
			\multirow{2}*{} &CPU  &23.85  & 26.77 & 36.10 & 38.27 \\ \cline{2-6}
            \hline
			\multirow{2}*{PmRGRK} &IT  &26353  &24134 & 21894 & 21843 \\ \cline{2-6}
			\multirow{2}*{} &CPU  &11.76  & 13.81 & 18.54 & 19.95\\ \cline{2-6}
            \hline
            \multicolumn{2}{|c|}{SU} &2.03  &1.94  & 1.95 & 1.92  \\ \hline
            \multirow{2}*{NmRGRK} &IT  &19707  &18115  & 17120 & 16538 \\ \cline{2-6}
			\multirow{2}*{} &CPU  &9.09  & 11.32 & 15.79 & 17.35\\ \cline{2-6}
            \hline
            \multicolumn{2}{|c|}{SU} &2.63  &2.37  & 2.29 & 2.21  \\ \hline
            \multicolumn{6}{|l|}{$\theta=0.9$} \\  \hline
			\multirow{2}*{ME-RGRK} &IT  &40545 & 34306 & 31566 & 31043 \\ \cline{2-6}
			\multirow{2}*{} &CPU  &24.66  & 23.45 & 30.20 &34.25 \\ \cline{2-6}
            \hline
			\multirow{2}*{PmRGRK} &IT  &26541  &23079  & 21313 & 20725 \\ \cline{2-6}
			\multirow{2}*{} &CPU  &11.70  & 12.89 & 15.94 & 18.48 \\ \cline{2-6}
            \hline
            \multicolumn{2}{|c|}{SU} &2.11  &1.82  & 1.89 & 1.85  \\ \hline
            \multirow{2}*{NmRGRK} &IT  &19345  &17336  & 15907 & 15955 \\ \cline{2-6}
			\multirow{2}*{} &CPU  &9.33  & 10.82 & 13.34& 16.08\\ \cline{2-6}
            \hline
            \multicolumn{2}{|c|}{SU} &2.64  &2.17  & 2.26 & 2.13  \\ \hline
		\end{tabular}
	\end{center}
	\label{tab:Table-2}
\end{table}

\begin{table}[!htb]
	\caption{IT and CPU for the ME-RGRK, PmRGRK, and NmRGRK methods with different $m$, $n$, and $p$ in Example \ref{Example_3}}
	\begin{center}
		\begin{tabular}{|c |c| c| c| c| c|}
			\hline
            \multicolumn{2}{|c|}{$(m,n,p)$} & (400,50,100) & (600,50,100) & (800,50,100)  & (1000,50,100)  \\ \hline
			\multicolumn{6}{|l|}{$\theta=0.5$} \\  \hline
			\multirow{2}*{ME-RGRK} &IT  &17209  & 16749 & 15124 & 13757 \\ \cline{2-6}
			\multirow{2}*{} &CPU  &7.15 & 8.42 & 9.07& 9.48 \\ \cline{2-6}
            \hline
			\multirow{2}*{PmRGRK} &IT  &11981  &11535  & 10829 & 8765 \\ \cline{2-6}
			\multirow{2}*{} &CPU  &3.84  & 4.74 &5.38 & 5.59 \\ \cline{2-6}
            \hline
            \multicolumn{2}{|c|}{SU} &1.86  &1.78  & 1.69 & 1.69  \\ \hline
            \multirow{2}*{NmRGRK} &IT  &9401  &9165  &8781  & 8064 \\ \cline{2-6}
			\multirow{2}*{} &CPU  &3.27  & 4.08 & 4.82 & 5.57 \\ \cline{2-6}
            \hline
            \multicolumn{2}{|c|}{SU} &2.19  &2.06  & 1.88 & 1.70  \\ \hline
            \multicolumn{6}{|l|}{$\theta=0.7$} \\  \hline
			\multirow{2}*{ME-RGRK} &IT  &16732  & 14813 & 13283 & 12238 \\ \cline{2-6}
			\multirow{2}*{} &CPU  &6.00  & 6.68 & 7.80 & 8.16 \\ \cline{2-6}
            \hline
			\multirow{2}*{PmRGRK} &IT  &11251  &9910  & 8964 & 8368 \\ \cline{2-6}
			\multirow{2}*{} &CPU  &3.20  & 3.60 & 4.35 & 4.79 \\ \cline{2-6}
            \hline
            \multicolumn{2}{|c|}{SU} &1.87  &1.86  & 1.80 & 1.70 \\ \hline
            \multirow{2}*{NmRGRK} &IT  &8471  &7942  & 7088 & 6558 \\ \cline{2-6}
			\multirow{2}*{} &CPU  &2.60 & 3.08 & 3.84 &4.54 \\ \cline{2-6}
            \hline
            \multicolumn{2}{|c|}{SU} &2.31  &2.17 & 2.03 & 1.80  \\ \hline
            \multicolumn{6}{|l|}{$\theta=0.9$} \\  \hline
			\multirow{2}*{ME-RGRK} &IT  &16337  & 14047 & 12752 & 12092 \\ \cline{2-6}
			\multirow{2}*{} &CPU  &5.43  & 6.00 & 6.90 &6.56\\ \cline{2-6}
            \hline
			\multirow{2}*{PmRGRK} &IT  &11009  &9563  & 8643 & 8304 \\ \cline{2-6}
			\multirow{2}*{} &CPU  &2.95  & 3.29 & 3.86 & 3.70\\ \cline{2-6}
            \hline
            \multicolumn{2}{|c|}{SU} &1.84  &1.82  & 1.79 & 1.77  \\ \hline
            \multirow{2}*{NmRGRK} &IT  &8043 &7061 & 6662 & 6237 \\ \cline{2-6}
			\multirow{2}*{} &CPU  &2.33  & 2.70 & 3.30 & 3.12\\ \cline{2-6}
            \hline
            \multicolumn{2}{|c|}{SU} &2.33  &2.23 & 2.09 & 2.10  \\ \hline
		\end{tabular}
	\end{center}
	\label{tab:Table-3}
\end{table}

\begin{table}[!htb]
	\caption{IT and CPU for the ME-RGRK, PmRGRK, and NmRGRK methods  with different $m$, $n$, and $p$ in Example \ref{Example_4}, where $r_1=r_2=40$.}
	\begin{center}
		\begin{tabular}{|c |c| c| c| c| c|}
			\hline
            \multicolumn{2}{|c|}{$(m,n,p)$} & (400,50,100) & (600,50,100) & (800,50,100)  & (1000,50,100) \\ \hline
			\multicolumn{6}{|l|}{$\theta=0.5$} \\  \hline
			\multirow{2}*{ME-RGRK} &IT  &17827  & 16885 & 14880 & 13453 \\ \cline{2-6}
			\multirow{2}*{} &CPU  &6.49  & 8.12 & 8.99 & 9.70 \\ \cline{2-6}
            \hline
			\multirow{2}*{PmRGRK} &IT  &12434  &11695  & 10391 & 9593\\ \cline{2-6}
			\multirow{2}*{} &CPU  &3.58  & 4.43 & 5.14 &5.88 \\ \cline{2-6}
            \hline
            \multicolumn{2}{|c|}{SU} &1.81  &1.83  & 1.75 & 1.65  \\ \hline
            \multirow{2}*{NmRGRK} &IT  &9607  &8897  &8011 & 7661 \\ \cline{2-6}
			\multirow{2}*{} &CPU  &2.97  &3.71 &4.31 & 5.65 \\ \cline{2-6}
            \hline
            \multicolumn{2}{|c|}{SU} &2.19  &2.19  & 2.09 & 1.72  \\ \hline
            \multicolumn{6}{|l|}{$\theta=0.7$} \\  \hline
			\multirow{2}*{ME-RGRK} &IT  &16939  & 13783 & 13541& 13193\\ \cline{2-6}
			\multirow{2}*{} &CPU  &6.08  &6.41 & 8.07& 8.39 \\ \cline{2-6}
            \hline
			\multirow{2}*{PmRGRK} &IT  &11687  &9376  & 9421 & 8977\\ \cline{2-6}
			\multirow{2}*{} &CPU  &3.29  &3.44 & 4.61 & 5.32\\ \cline{2-6}
            \hline
            \multicolumn{2}{|c|}{SU} &1.85  &1.86  & 1.75 & 1.58  \\ \hline
            \multirow{2}*{NmRGRK} &IT  &8712  &7183  & 7323 & 6749 \\ \cline{2-6}
			\multirow{2}*{} &CPU  &2.64  & 2.87 & 4.04 & 4.80\\ \cline{2-6}
            \hline
            \multicolumn{2}{|c|}{SU} &2.30  &2.23  & 2.00 & 1.75  \\ \hline
            \multicolumn{6}{|l|}{$\theta=0.9$} \\  \hline
			\multirow{2}*{ME-RGRK} &IT  &15909  & 13454& 12107 & 12851 \\ \cline{2-6}
			\multirow{2}*{} &CPU  &5.43  & 5.78 & 6.70 & 8.41\\ \cline{2-6}
            \hline
			\multirow{2}*{PmRGRK} &IT  &10719 &9141  & 8280 & 8850  \\ \cline{2-6}
			\multirow{2}*{} &CPU  &2.85  & 3.08 & 3.74 & 4.82 \\ \cline{2-6}
            \hline
            \multicolumn{2}{|c|}{SU} &1.91  &1.88  & 1.79 & 1.74  \\ \hline
            \multirow{2}*{NmRGRK} &IT  &7728  &6647 & 6191 & 6424 \\ \cline{2-6}
			\multirow{2}*{} &CPU  &2.26  & 2.50 & 3.23 & 4.30\\ \cline{2-6}
            \hline
            \multicolumn{2}{|c|}{SU} & 2.40 &2.32  &2.08 & 1.95 \\ \hline
		\end{tabular}
	\end{center}
	\label{tab:Table-4-4}
\end{table}

The above observations are intuitively illustrated in Figures \ref{fig:PmRGRK+NmRGRK_Example-1+RRNvsIT+CPU} -- \ref{fig:PmRGRK+NmRGRK_Example-4+RRNvsIT+CPU},  which depict the curves of the relative residual norm versus the iteration step and the computing time for Examples \ref{Example_1} -- \ref{Example_4}, respectively,  with $m=1200$, $n=50$, and $p=100$. According to  this figure, as the iteration step and computing time increase, the relative residual norm for the PmRGRK and NmRGRK methods decreases more rapidly than the ME-RGRK method. In all convergence cases, the NmRGRK method has the fastest convergence rate and costs the least computing time. Even though the PmRGRK and NmRGRK methods use more floats than the ME-RGRK method at each iteration, the computing times for the PmRGRK and NmRGRK methods to fixed accuracy are significantly lower than the ME-RGRK method, mainly due to the momentum acceleration.

\begin{figure}[!htb]
\centering
    \subfigure{
		\includegraphics[width=0.48\textwidth]{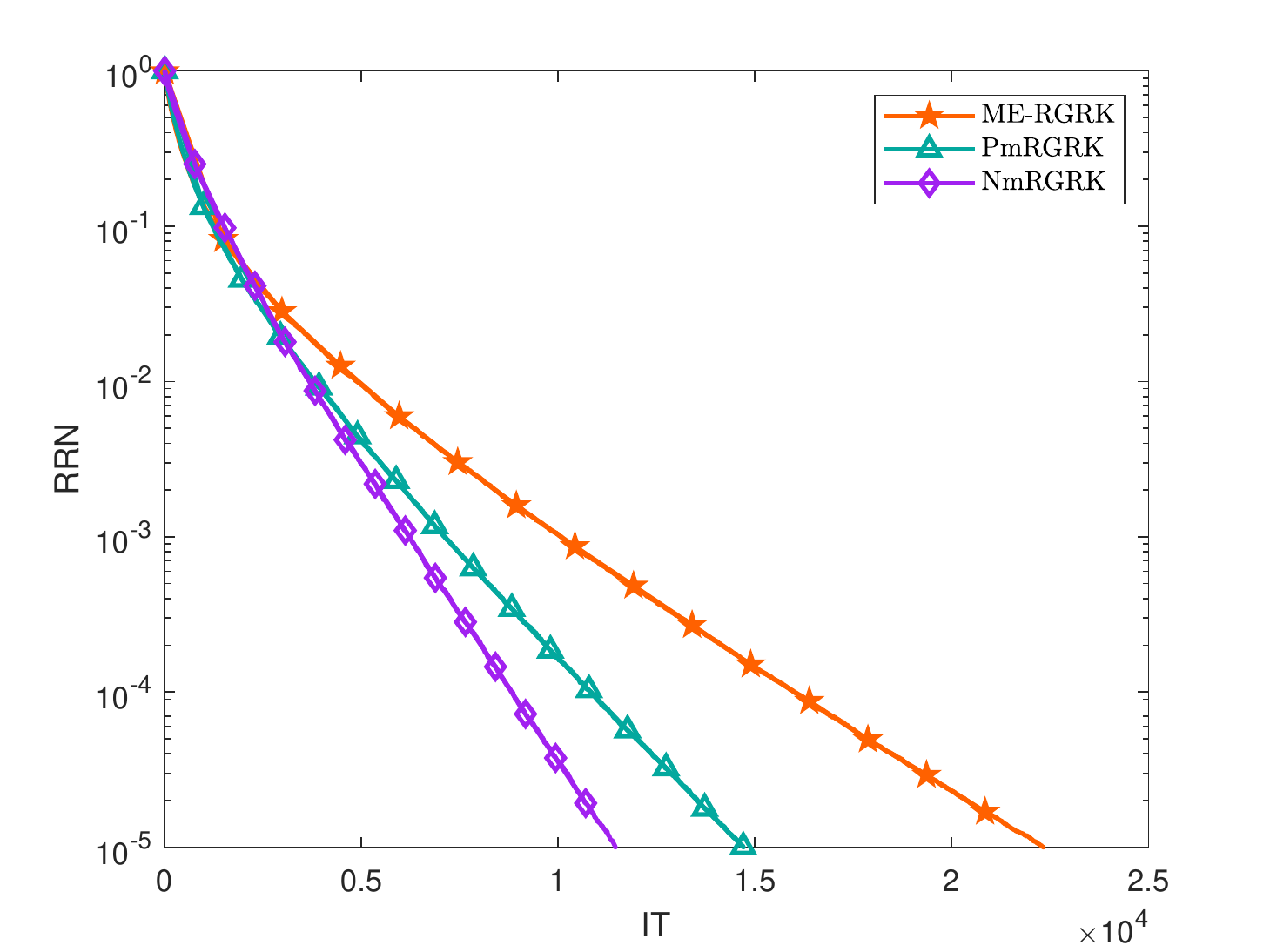}}
    \subfigure{
		\includegraphics[width=0.48\textwidth]{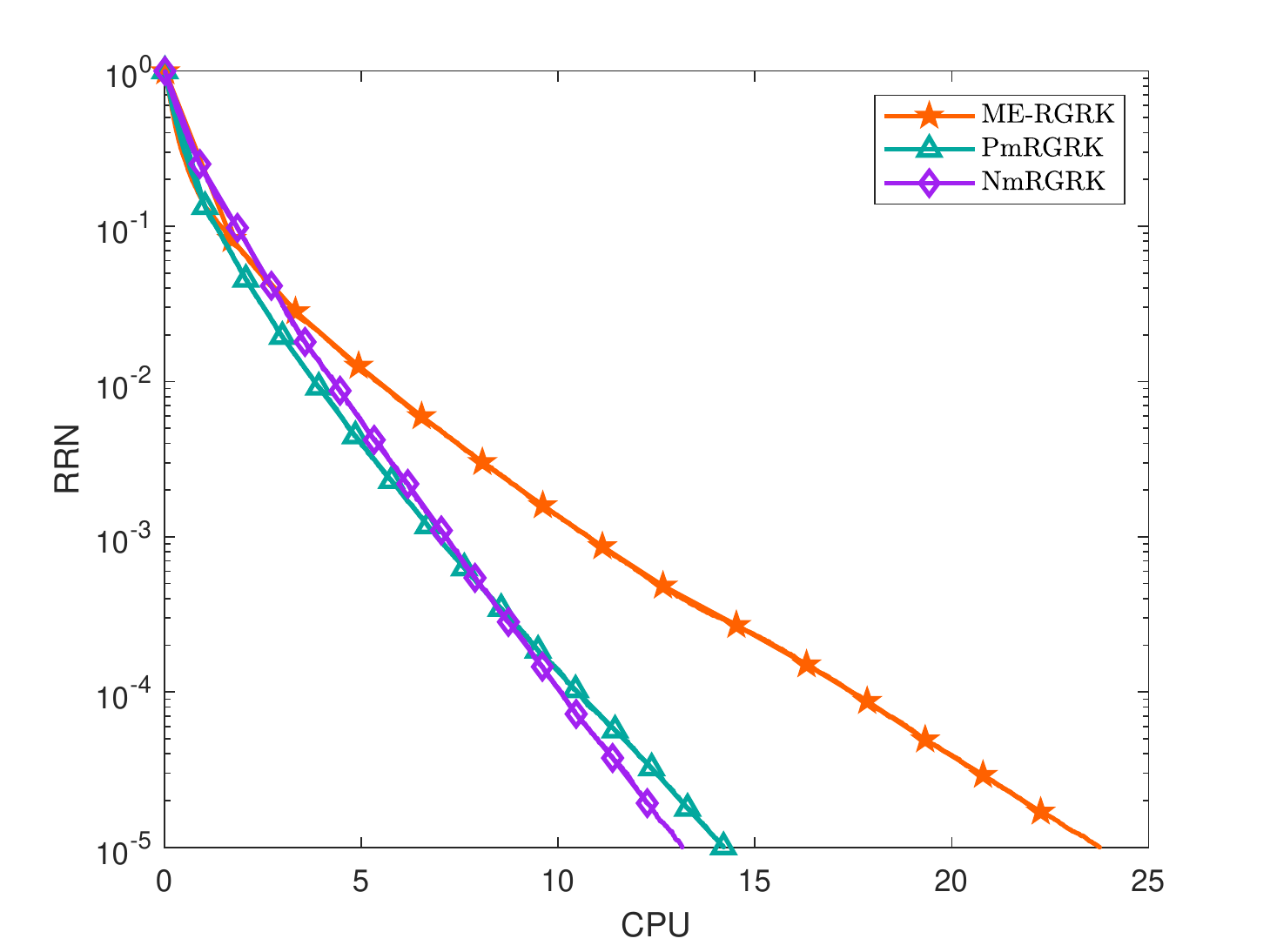}}
\caption{The convergence behaviors of RRN versus IT (left) and CPU (right) given by the ME-RGRK, PmRGRK, and NmRGRK methods for Example \ref{Example_1} with $m=1200$, $n=50$, $p=100$, and $\theta=0.9$.}
\label{fig:PmRGRK+NmRGRK_Example-1+RRNvsIT+CPU}
\end{figure}

\begin{figure}[!htb]
\centering
    \subfigure{
		\includegraphics[width=0.48\textwidth]{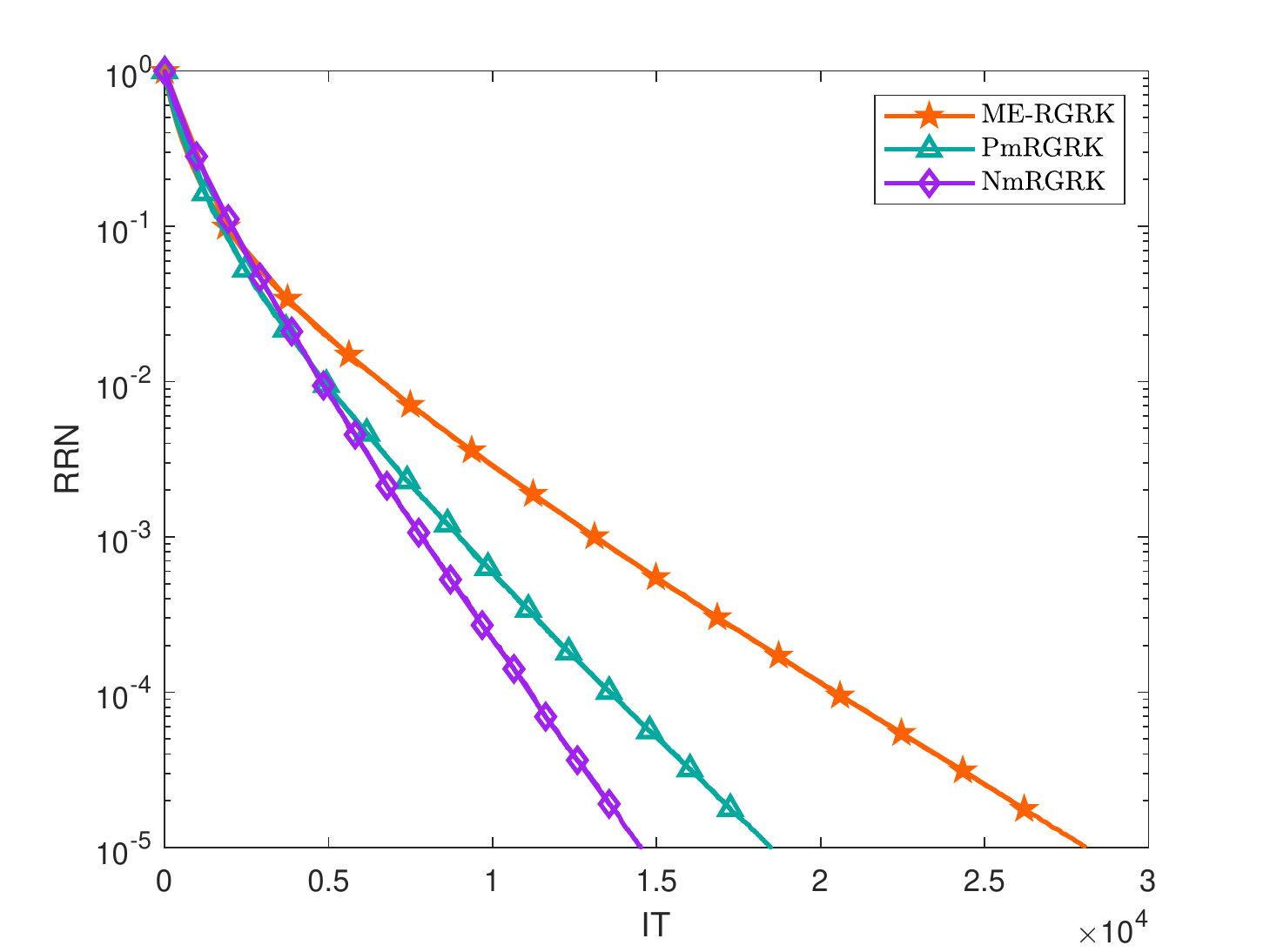}}
    \subfigure{
		\includegraphics[width=0.48\textwidth]{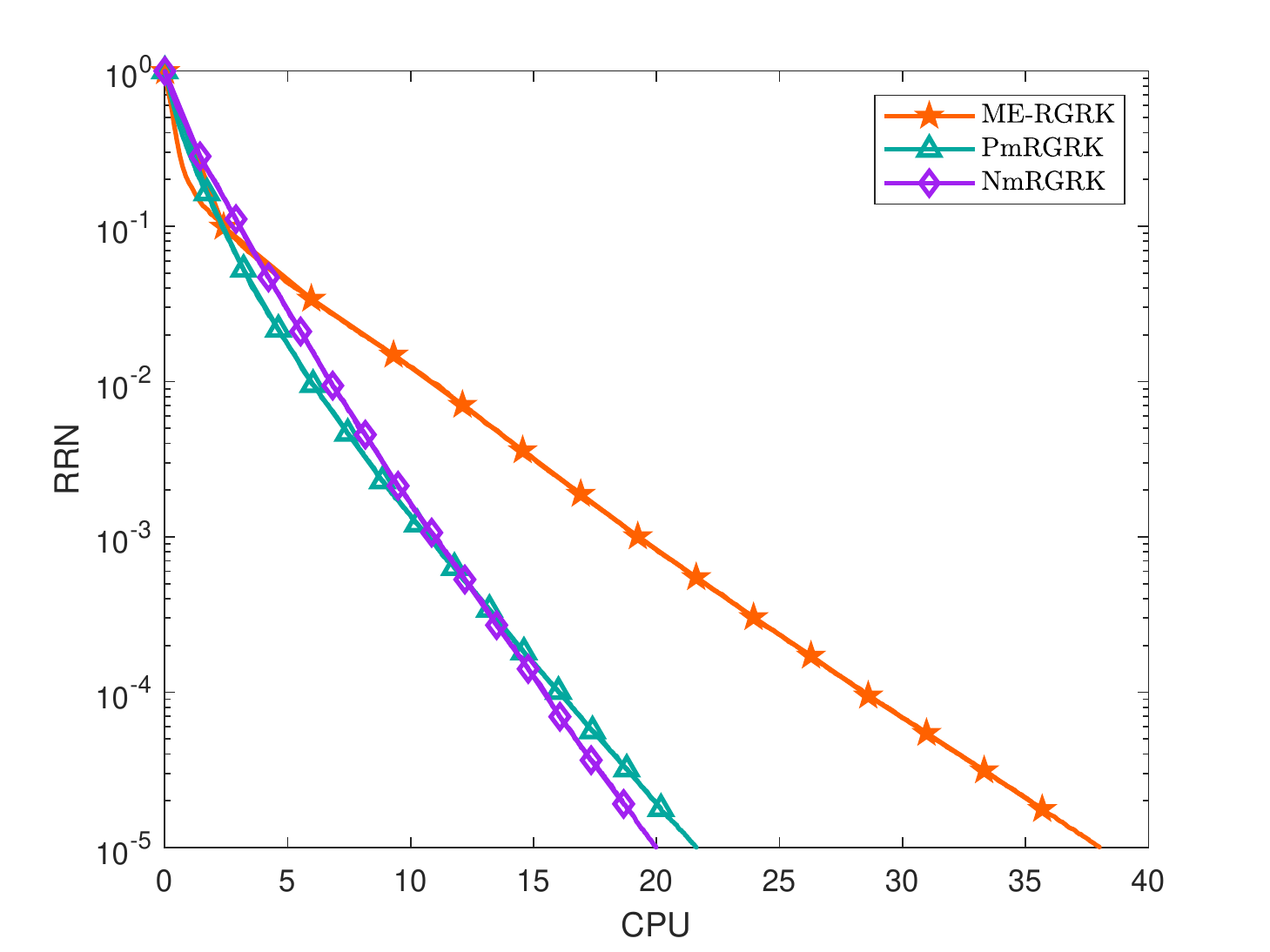}}
\caption{The convergence behaviors of RRN versus IT (left) and CPU (right) given by the ME-RGRK, PmRGRK, and NmRGRK methods for Example \ref{Example_2} with $m=1200$, $n=50$, $p=100$, and $\theta=0.9$.}
\label{fig:PmRGRK+NmRGRK_Example-2+RRNvsIT+CPU}
\end{figure}

\begin{figure}[!htb]
\centering
    \subfigure{
		\includegraphics[width=0.48\textwidth]{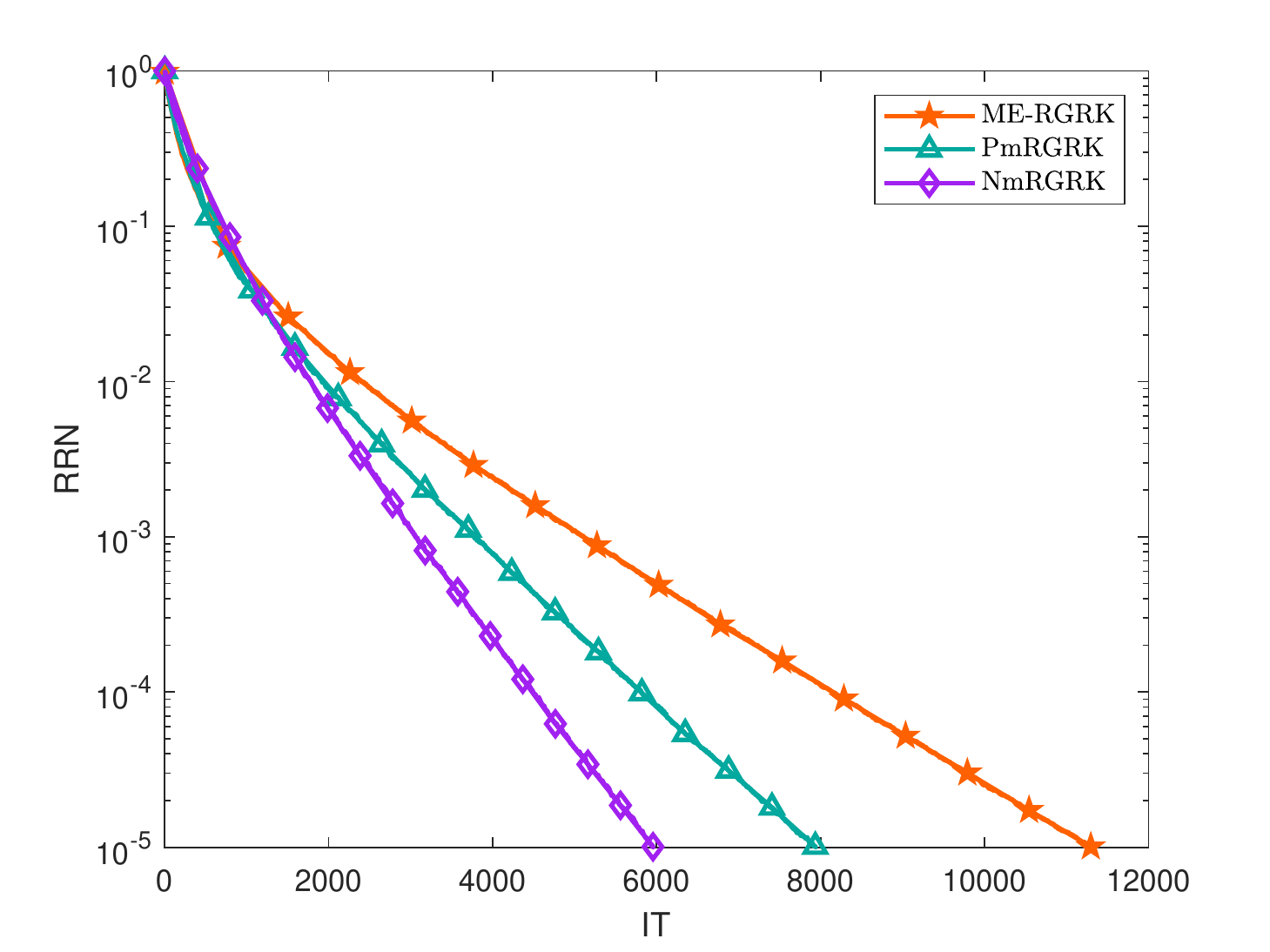}}
    \subfigure{
		\includegraphics[width=0.48\textwidth]{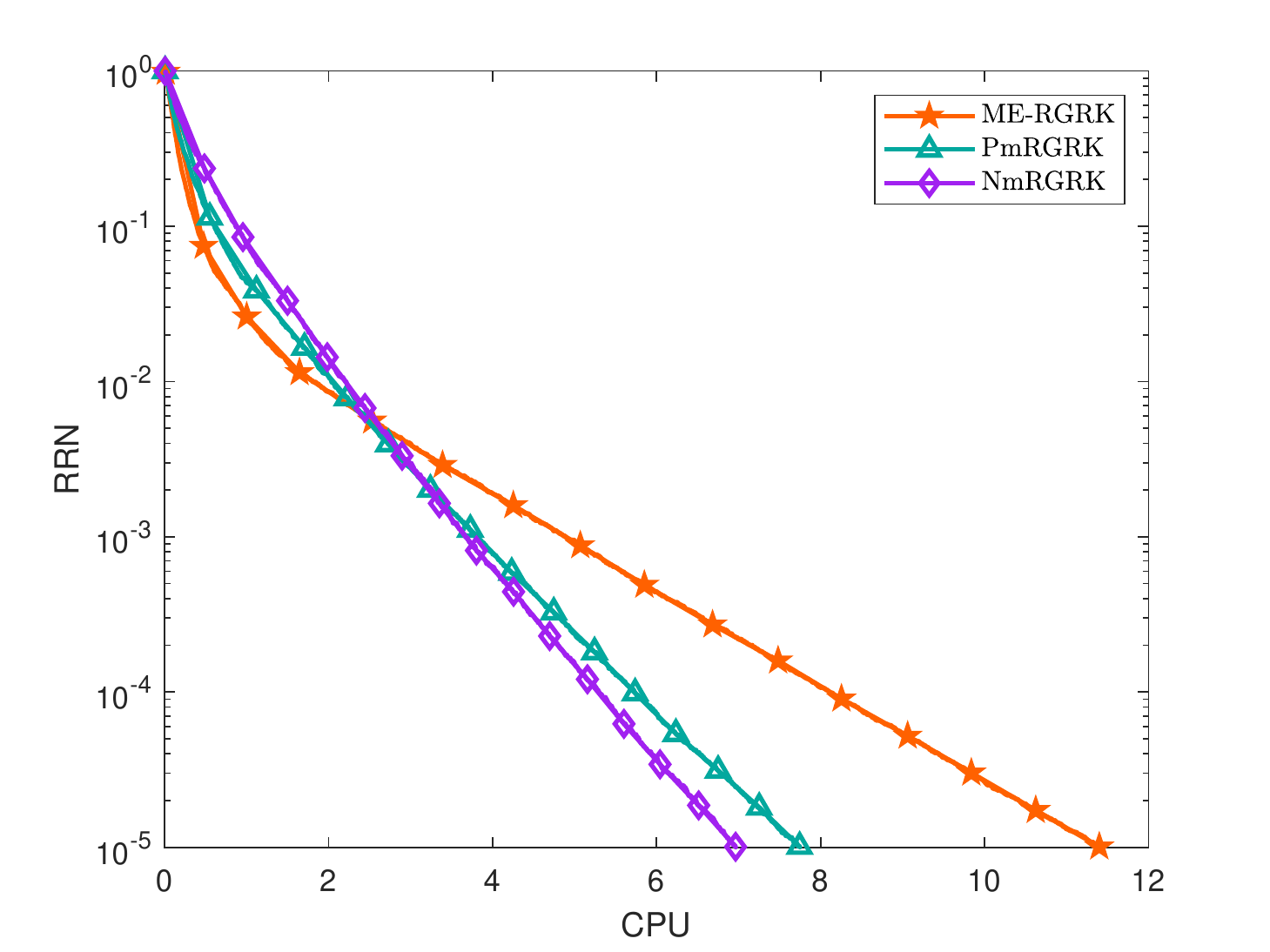}}
\caption{The convergence behaviors of RRN versus IT (left) and CPU (right) given by the ME-RGRK, PmRGRK, and NmRGRK methods for Example \ref{Example_3} with $m=1200$, $n=50$, $p=100$, and $\theta=0.9$.}
\label{fig:PmRGRK+NmRGRK_Example-3+RRNvsIT+CPU}
\end{figure}

\begin{figure}[!htb]
\centering
    \subfigure{
		\includegraphics[width=0.48\textwidth]{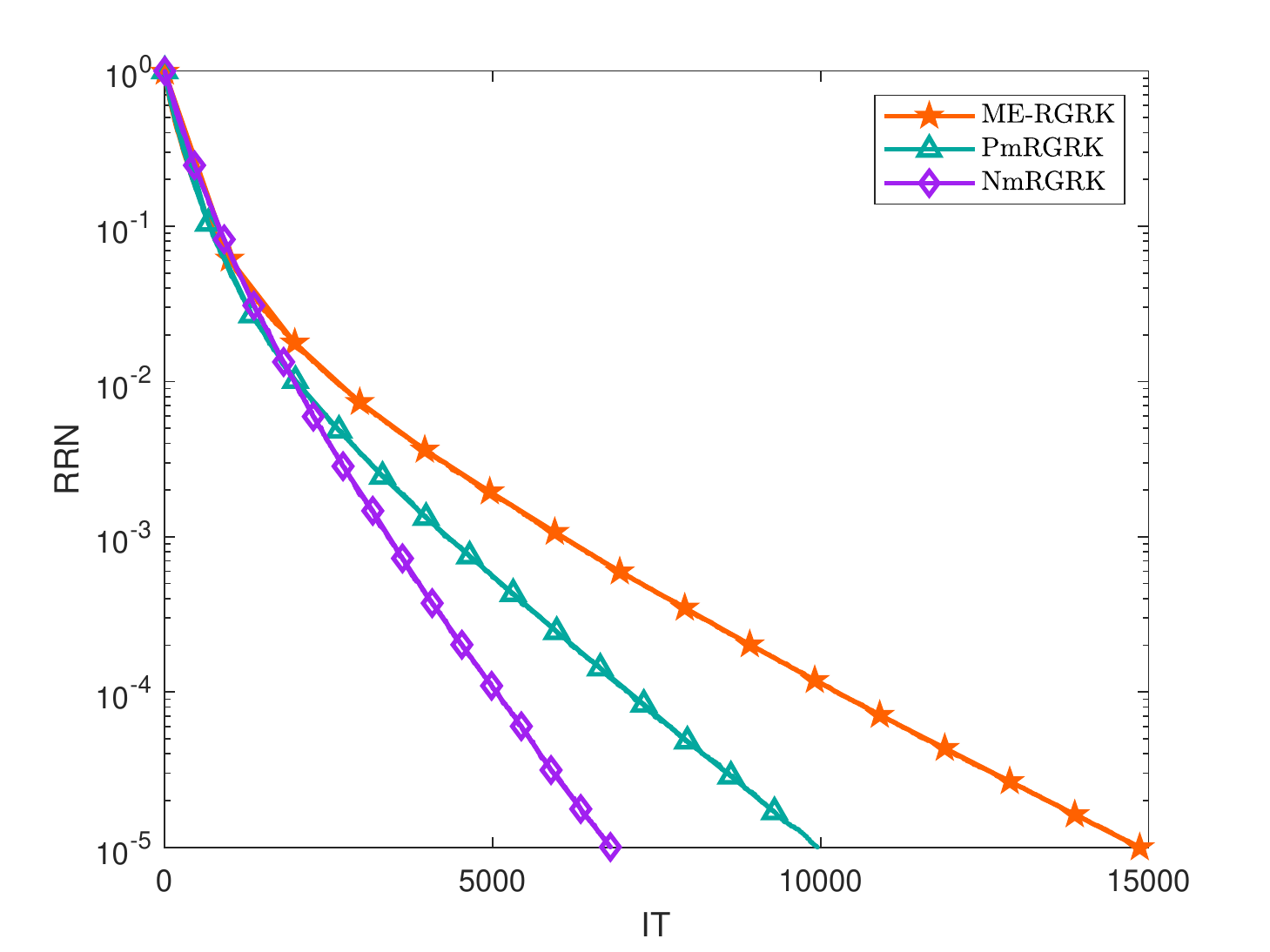}}
    \subfigure{
		\includegraphics[width=0.48\textwidth]{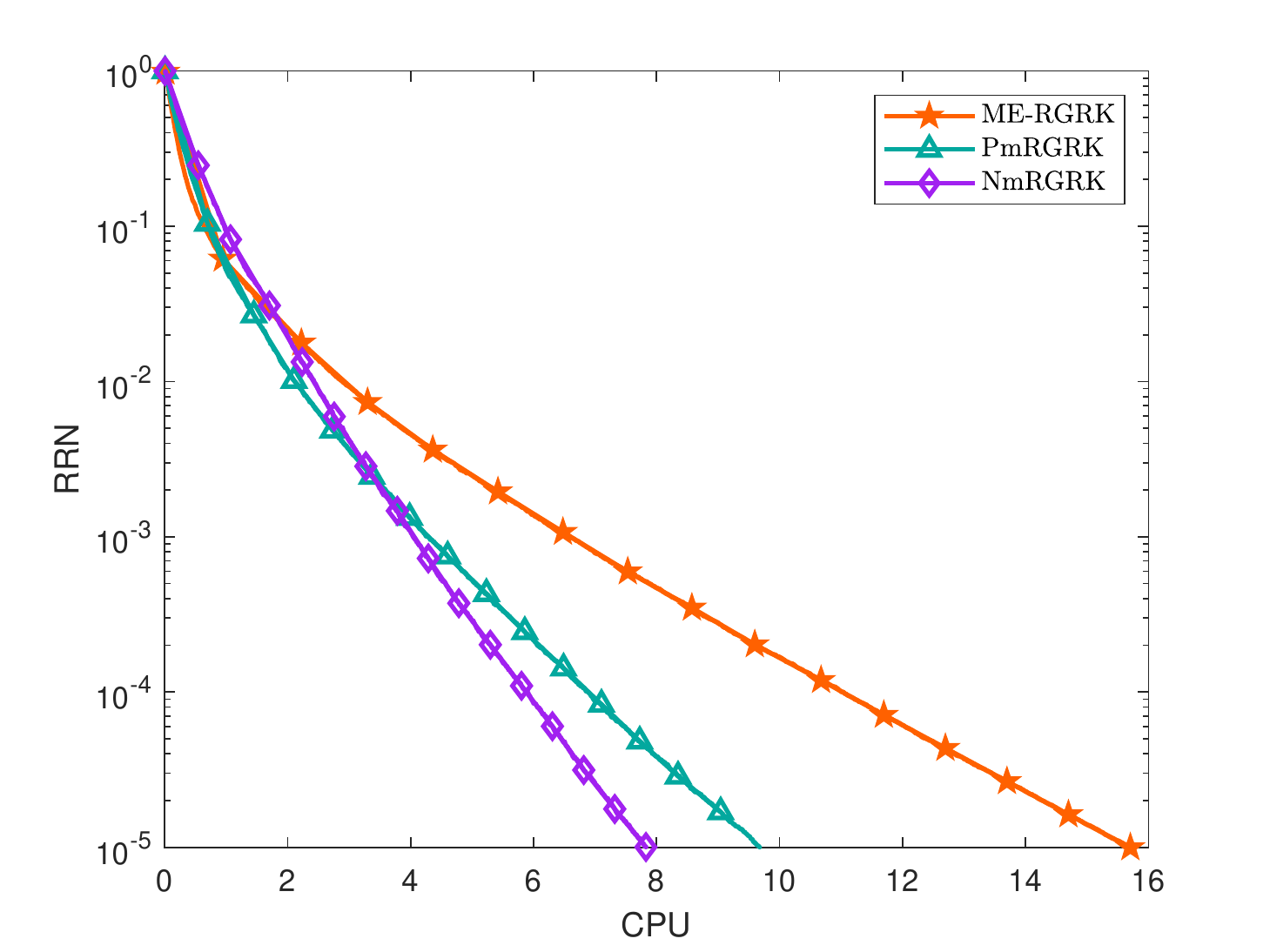}}
\caption{The convergence behaviors of RRN versus IT (left) and CPU (right) given by the ME-RGRK, PmRGRK, and NmRGRK methods for Example \ref{Example_4} with $m=1200$, $n=50$, $p=100$, and $\theta=0.9$.}
\label{fig:PmRGRK+NmRGRK_Example-4+RRNvsIT+CPU}
\end{figure}

We emphasize that the experimentally iterative parameter pair $(\alpha,\beta)$ for PmRGRK and NmRGRK, used in the above experiment, is not optimal. We may choose them to be any positive constant bounded by Theorems \ref{thm:CA-PmRGRK} -- \ref{thm:CA-NmRGRK}. Selecting an appropriate pair of iteration parameters may allow the momentum method to converge more quickly. For the sake of illustration, we utilize the PmRGRK and NmRGRK methods with various $(\alpha,\beta)$ to resolve the matrix equation in Example \ref{Example_4} and depict their convergence behaviors of RRN versus IT in Figures \ref{fig:PmRGRK_Example_4-alpha+beta} -- \ref{fig:NmRGRK_Example_4-alpha+beta}, respectively. In this example, we assign values to the input parameters as $m=400$, $n=50$, $p = 100$, and $r_1=r_2=40$, and to $(\alpha,\beta)$ as $(0.5,0.6)$, $(0.7,0.6)$, $(1.0,0.4)$, $(1.1,0.3)$, $(1.3,0.1)$, and $(1.3,0.3)$. It can be seen that both the PmRGRK and NmRGRK methods successfully compute an approximate solution for all cases. A satisfactory parameter pair is chosen by $(\alpha,\beta) = (1.0,0.4)$ and $(1.1,0.3)$ for the PmRGRK and NmRGRK methods, respectively. In this case, the iteration counts of the PmRGRK (resp., NmRGRK) method are $10078$, $8249$, and $6863$ (resp., $7671$, $7196$, and $6335$) when $\theta=0.5$, $0.7$, and $0.9$, respectively, which are appreciably smaller than those in Table \ref{tab:Table-4-4}.

\begin{figure}[!htb]
    \subfigure[$\theta=0.5$]{
		\includegraphics[width=0.48\textwidth]{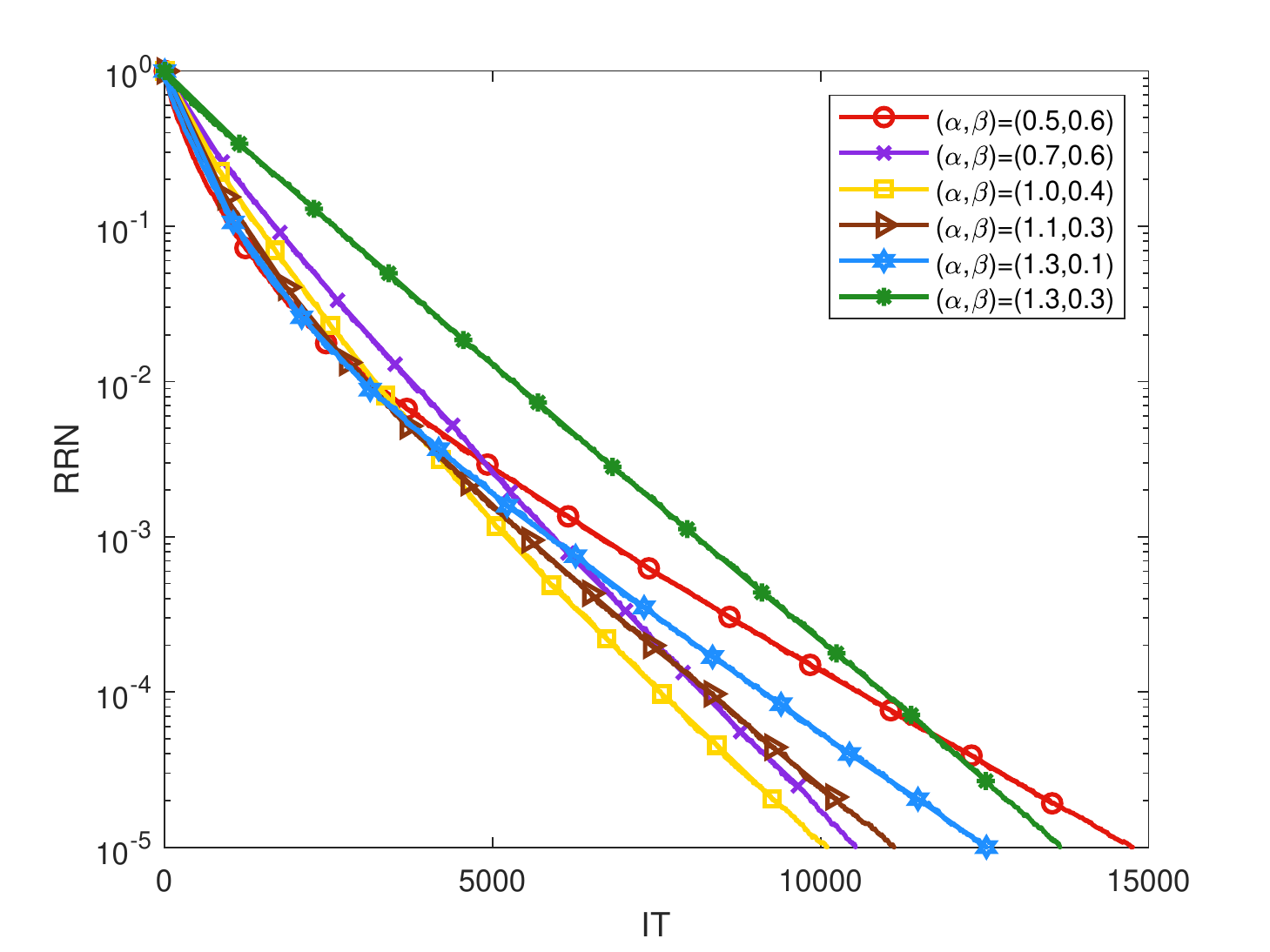}}
    \subfigure[$\theta=0.7$]{
		\includegraphics[width=0.48\textwidth]{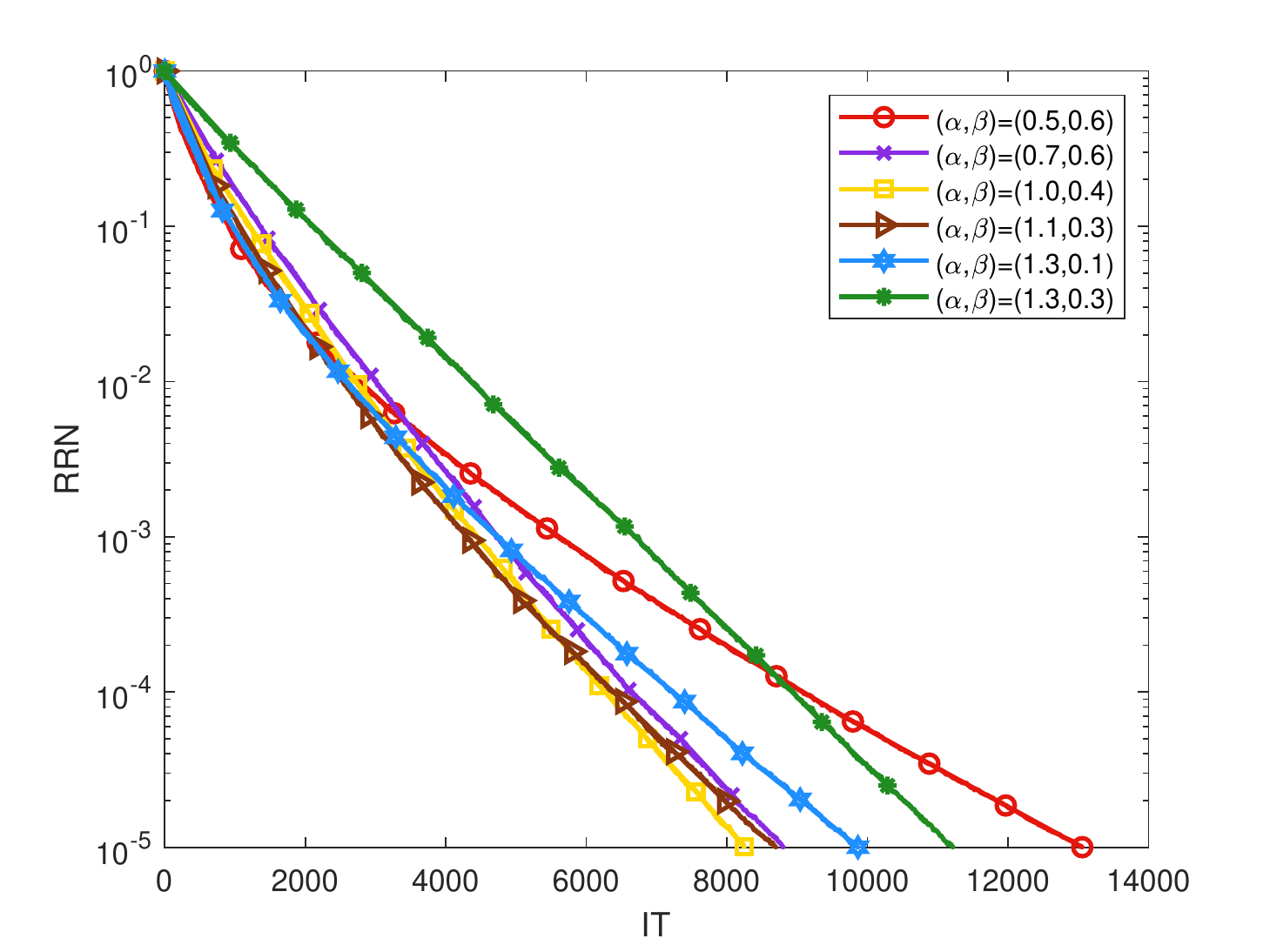}}
    \subfigure[$\theta=0.9$]{
		\includegraphics[width=0.48\textwidth]{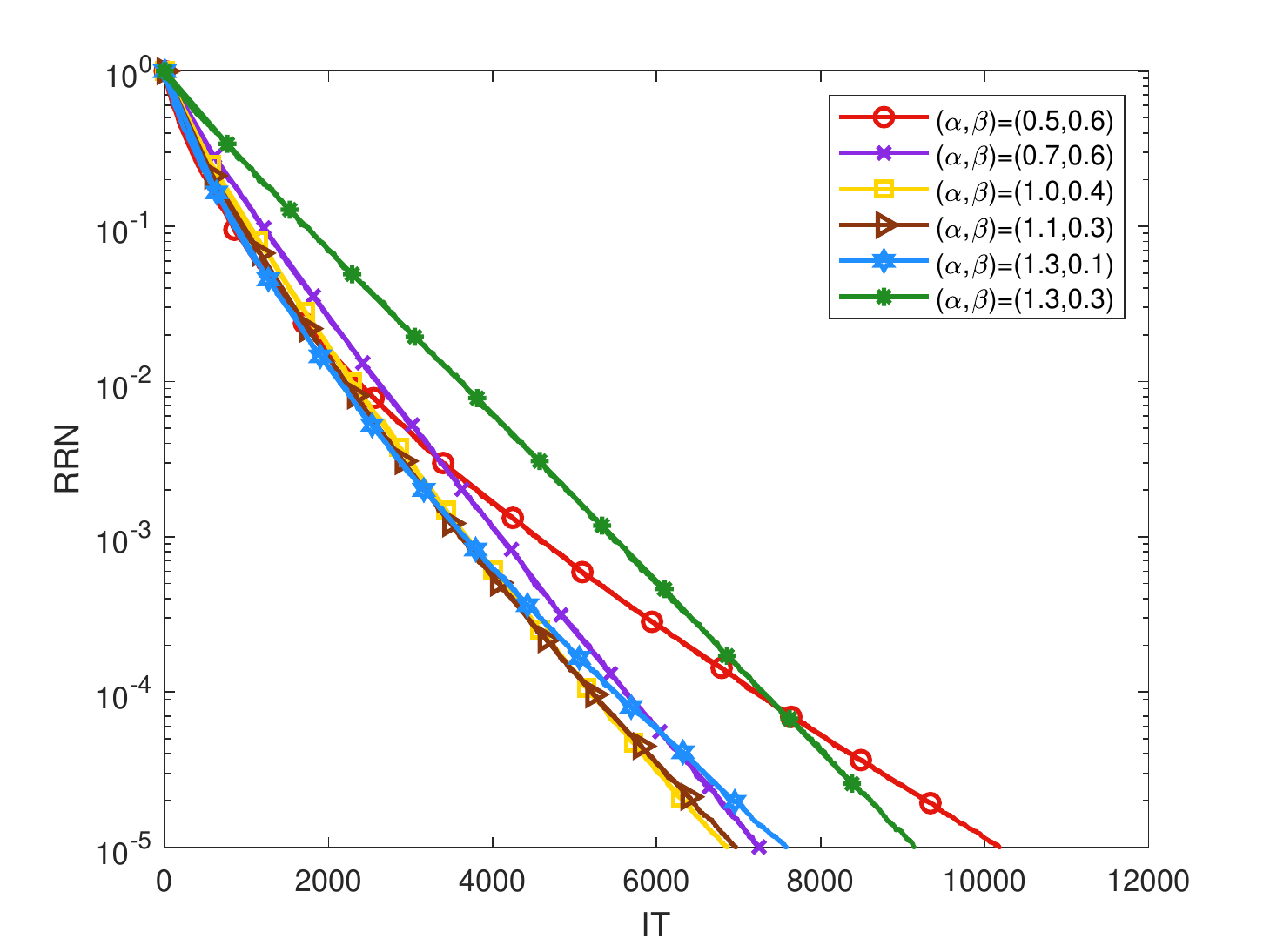}}
\caption{The convergence behaviors of RRN versus IT given by the PmRGRK method for Example \ref{Example_4} with different $\alpha$ and $\beta$ when $m=400$, $n=50$, and $p=100$, and $\theta=0.5$, $0.7$, and $0.9$.}
\label{fig:PmRGRK_Example_4-alpha+beta}
\end{figure}

\begin{figure}[!htb]
    \subfigure[$\theta=0.5$]{
		\includegraphics[width=0.48\textwidth]{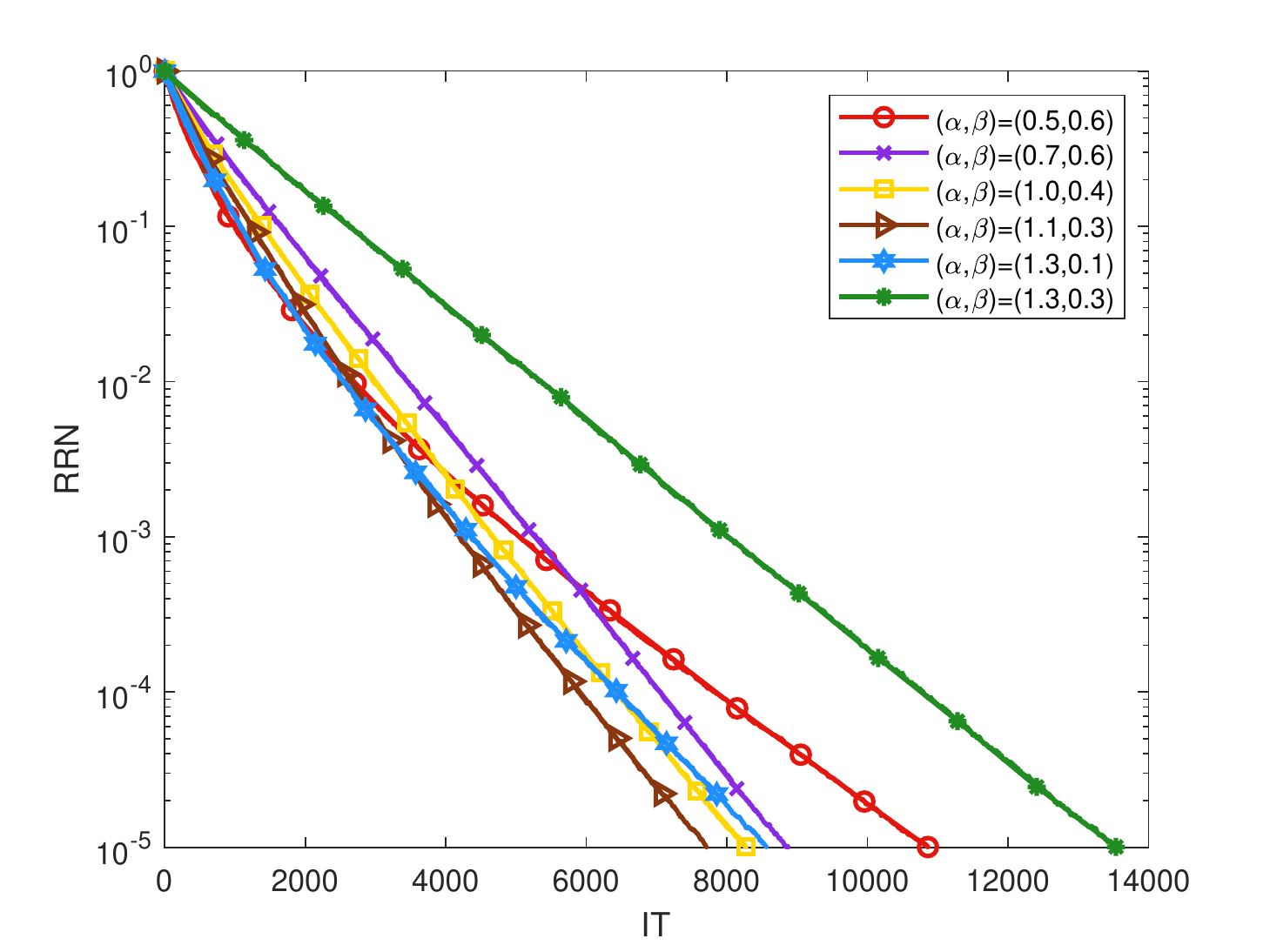}}
    \subfigure[$\theta=0.7$]{
		\includegraphics[width=0.48\textwidth]{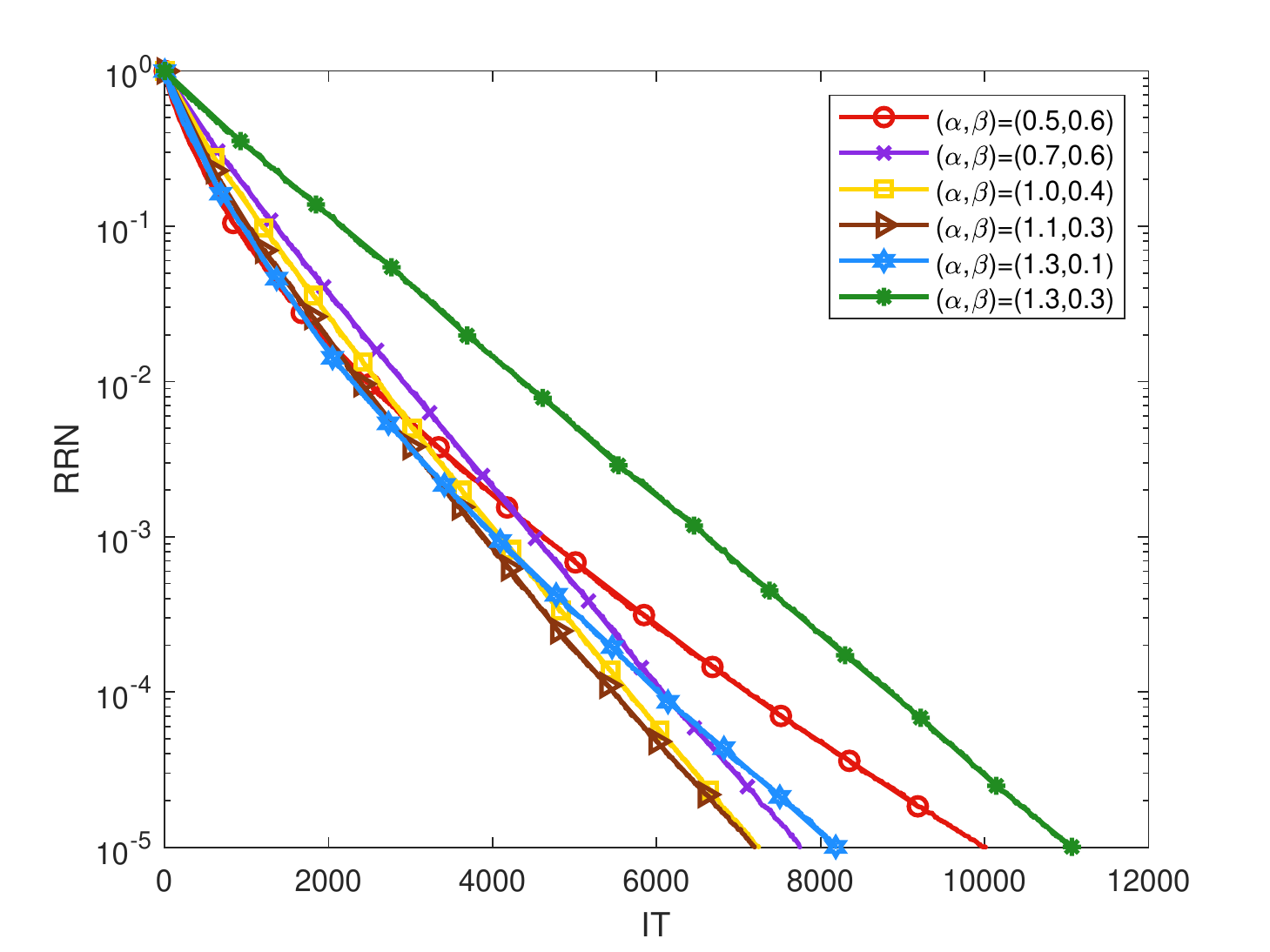}}
    \subfigure[$\theta=0.9$]{
		\includegraphics[width=0.48\textwidth]{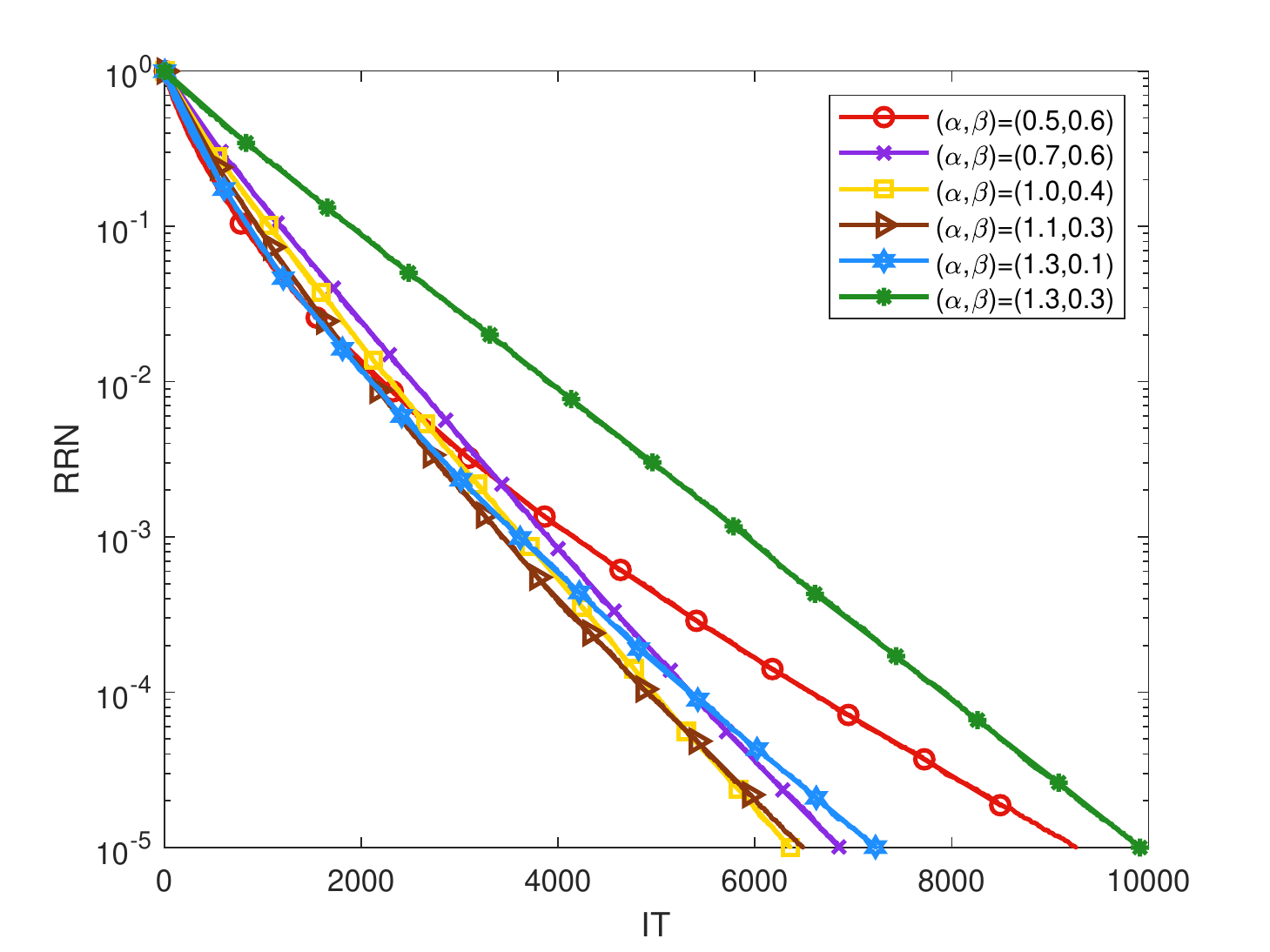}}
\caption{The convergence behaviors of RRN versus IT given by the NmRGRK method for Example \ref{Example_4} with different $\alpha$ and $\beta$ when $m=400$, $n=50$, and $p=100$, and $\theta=0.5$, $0.7$, and $0.9$.}
\label{fig:NmRGRK_Example_4-alpha+beta}
\end{figure}

\subsection{Tensor product surface fitting}
In general, geometric iterative method (GIM) computes a B-spline surface according to the following steps: parameterization, knot vector generation, and control point solving \cite{14DL, 02Farin}. A lot of work has been done on the first two steps. For example, the parameters can be evaluated through chordal parameterization \cite{14DL}  or centripetal parameterization \cite{97PT} for the data points in rows or grids. The other works on parameterization, such as polygonal meshes and point clouds, were given by \cite{16CHL, 02LPRM}. Knot vector generation plays an important role in B-spline surface fitting, since it decides the number of control points \cite{22JWH}. After that, the surface can be obtained by control point solving. 

We consider to fit the ordered data point set $ \{Q_{i,j} \in \mathbb{R}^3 : i\in[m], j\in[p] \}$ sampled uniformly from the following two surfaces, whose coordinates are given by
\begin{align*}
{\rm Surface~1:}
\left \{
\begin{array}{l}
x = -2 t \cos(s) + 2 \cos(s) / t - 2 t^3 \cos(3 s) / 3\vspace{1ex}\\
y = 6t\sin(s) - 2 \sin(s) /t - 2 t^3 \sin(3 s) / 3\vspace{1ex}\\
z = 4 \log(t)
\end{array}
\right.
\end{align*}
for $0.5 \leq t \leq 1,~0 \leq s \leq 2 \pi$ and
\begin{align*}
{\rm Surface~2:}
\left \{
\begin{array}{l}
x = (2 + \cos(t)) (s/3 - \sin(s))           \vspace{0.75ex}\\
y = (2 + \cos(t - 2 \pi / 3)) (\cos(s) - 1) \vspace{0.75ex}\\
z = (2 + \cos(t + 2 \pi / 3)) (\cos(s) - 1)
\end{array}
\right.
\end{align*}
for $-\pi \leq t \leq  \pi,~-2\pi \leq s \leq 2\pi$, respectively; see also \verb"http://paulbourke.net/geometry/".

The B-spline tensor product fitting surface can be represented as
\begin{align}\label{eq:surface_S(x)}
  S^{(k)}(x,y) = \sum_{h=1}^n\sum_{t=1}^n P_{h,t} \phi_{h,t}(x,y),
\end{align}
where $\phi_{h,t}(x,y)=\phi_{h}(x)\phi_{t}(y)$ are the products of two B-spline bases with the uniform knot vectors, $(x,y)$ is determined in $\Omega \subset \mathbb{R}^2$, and $P_{h,t}$ are the control coefficients for $h$, $t=1,2,\cdots,n$. The new control coefficients of GIM are updated by
\begin{align*}
 P_{h,t}^{(k+1)} =  P_{h,t}^{(k)} +   \Delta_{h,t}^{(k)},
\end{align*}
where $\Delta_{h,t}^{(k)} \in \mathbb{R}^3$ is called the adjust vector and computed by $P_{h,t}^{(k)}$ and $Q_{i,j}$.

Let the $x$-, $y$-, and $z$-coordinates of control coefficient $P_{h,t}^{(k)}$ (resp., adjust vector $\Delta_{h,t}^{(k)}$) be respectively stored in the matrices $P^{(k,1)}$, $P^{(k,2)}$, and $P^{(k,3)}$  (resp., $\Delta^{(k,1)}$, $\Delta^{(k,2)}$, and $\Delta^{(k,3)}$). From algebraic aspects, the GIM iterative processes,
\begin{align*}
   P^{(k+1,1)} = P^{(k,1)} + \Delta^{(k,1)},~
   P^{(k+1,2)} = P^{(k,2)} + \Delta^{(k,2)},~{\rm and}~~
   P^{(k+1,3)} = P^{(k,3)} + \Delta^{(k,3)},
\end{align*}
are equal to solving three matrix equation, where the coefficient matrices are the collocation matrices of the B-spline bases on a parameter sequence and knot vector. Therefore, the ME-RGRK, PmRGRK, and NmRGRK methods are suitable for tensor product surface fitting.

The implementation details of ME-RGRK, PmRGRK, and NmRGRK tensor product surface fittings are presented as follows. We first arrange the data points into a three-order tensor $Q = \left[Q_1;~Q_2;~Q_3 \right] \in \mathbb{R}^{m \times p \times 3}$ and input the initial values, including
two collocation matrices $A$ and $B^T$,
initial vectors $P^{(0,1)}\in \mathbb{R}^{n\times n}$ (resp., $P^{(0,2)}$, $P^{(0,3)}$),
and the right-hand side $Q_1$ (resp., $Q_2$, $Q_3$), where the collocation matrices and data points are generated by Example \ref{Ex:CAGD-AXB=C}. Then, we compute the next control point matrix $P^{(k,1)}$ (resp., $P^{(k,2)}$, $P^{(k,3)}$) using the update rules in Algorithms \ref{alg:ME-RGRK}, \ref{alg:PmRGRK}, and \ref{alg:NmRGRK}. As a result, the approximate surface is formulated according to formula \eqref{eq:surface_S(x)}.

\begin{example}\label{Ex:CAGD-AXB=C}
In this example, the coefficient matrices in \eqref{AXB=C} are from the tensor product surface fitting. As other researchers do, we first assign two parameter sequences $\nu_1$ and $\nu_2$, and two knot vectors $\mu_1$ and $\mu_2$ of cubic B-spline basis, whose formulations can be  respectively referred by equations (9.5) and (9.69) in the book \cite{97PT}. Then, we obtain the collocation matrices using the MATLAB built-in function as $A={\sf spcol}(\nu_1, 4, \mu_1)$ and $B^T={\sf spcol}(\nu_2, 4, \mu_2)$.
\end{example}

The PmRGRK and NmRGRK methods can be started with arbitrary initial control points in the column space of the collocation matrix. A suitable and efficient choice is to set the initial control points to be $P^{(0)}_{ij} =  Q_{f_1(i), f_2(j)}$, where
\begin{align*}
f_1(i) =
\left \{
\begin{array}{ll}
1, &  i=1          \vspace{0.75ex}\\
\lfloor  mi/n  \rfloor, & i\neq 1
\end{array}
\right.
\quad {\rm and} \quad
f_2(j) =
\left \{
\begin{array}{ll}
1, &  j=1          \vspace{0.75ex}\\
\lfloor  pj/n  \rfloor, & j\neq 1
\end{array}
\right.
\end{align*}
for $i,j\in [n]$ with the notation $\lfloor \cdot \rfloor$ being the greatest integer function; see, e.g., \cite{14DL,02Farin, 97PT}. At the $k$th iteration, the relative  residual norm is defined by RRN = $E_k/E_0$ for $k=0,1,2,\cdots$, where $E_k$ is computed by
\begin{align*}
 E_k = \bF{ Q_1 -  AP^{(k,1)}B}+\bF{ Q_2 -  AP^{(k,2)}B}+\bF{ Q_3 -  AP^{(k,3)}B}.
\end{align*}
The computation is terminated once RRN is less than $5 \times 10^{-4}$.

Next, we utilize the ME-RGRK, PmRGRK, and NmRGRK methods to fit $4\times 10^4$ ($m=400$ and $p=100$) data points, as shown in Figure \ref{fig:PmRGRK+Ex5_Surface_initial}. These experiments are realized by using cubic B-spline bases with $n=100$. The relaxation parameter is set to be $\theta=0.9$. The computational results of RRN, IT, and CPU are reported in Table \ref{tab:PmRGRK+NmRGRK:Exp5+m400p100n100}. We can read that the PmRGRK and NmRGRK methods always successfully compute an approximate solution for the matrix equation \eqref{AXB=C} arising from Example \ref{Ex:CAGD-AXB=C}, but the ME-RGRK method fails due to the number of the iteration steps exceeding $1\times 10^{5}$. Hence, the PmRGRK and NmRGRK methods significantly outperform the ME-RGRK method in terms of iteration count. The fitting surfaces constructed by the PmRGRK and NmRGRK methods are shown in Figures \ref{PmRGRK+NmRGRK-fig:VerrillMinimal} and \ref{PmRGRK+NmRGRK-fig:BentHorns}.

\begin{figure}[!htb]
    \subfigure[Surface 1]{
		\includegraphics[width=0.48\textwidth]{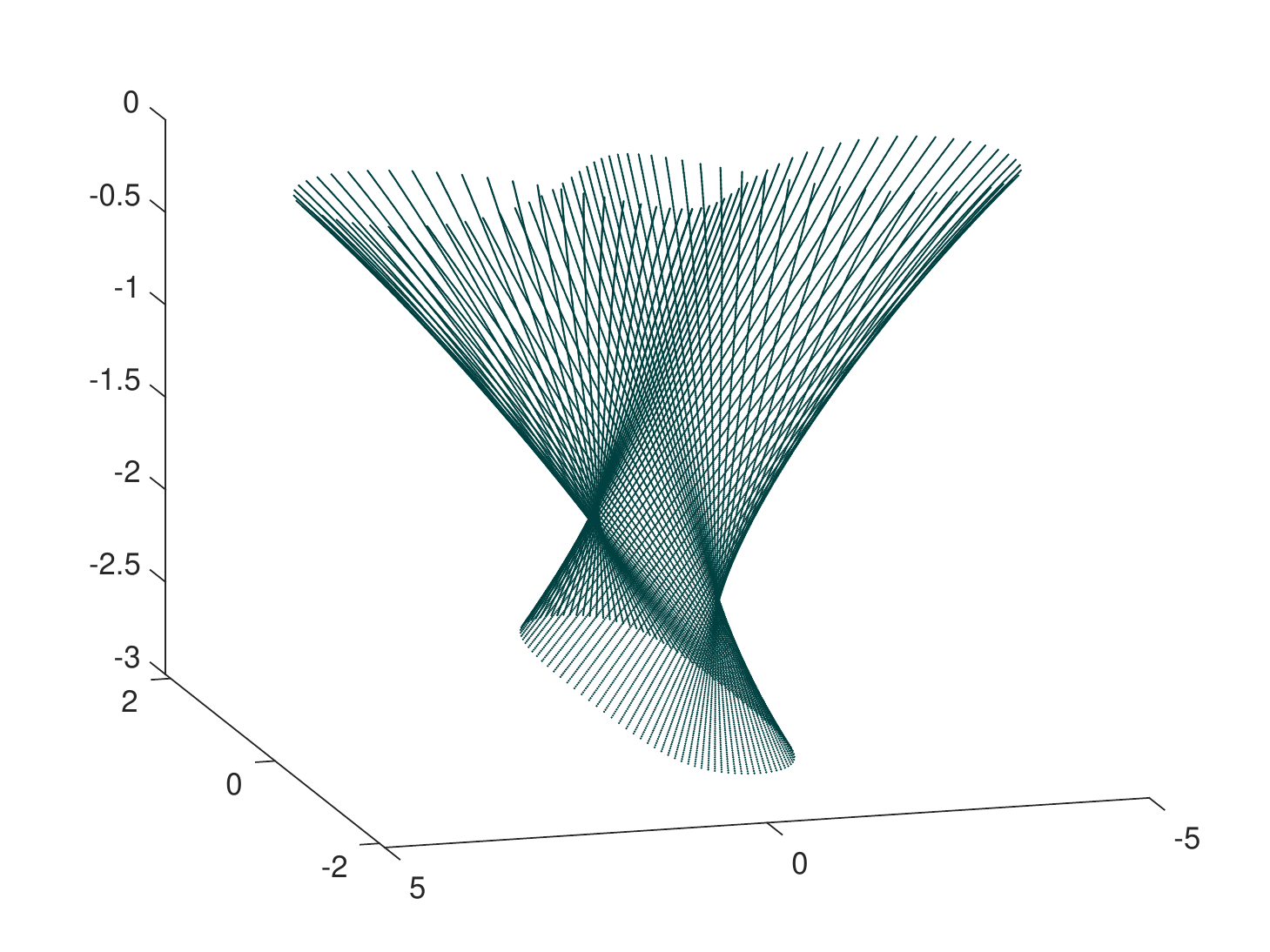}}
    \subfigure[Surface 2]{
		\includegraphics[width=0.48\textwidth]{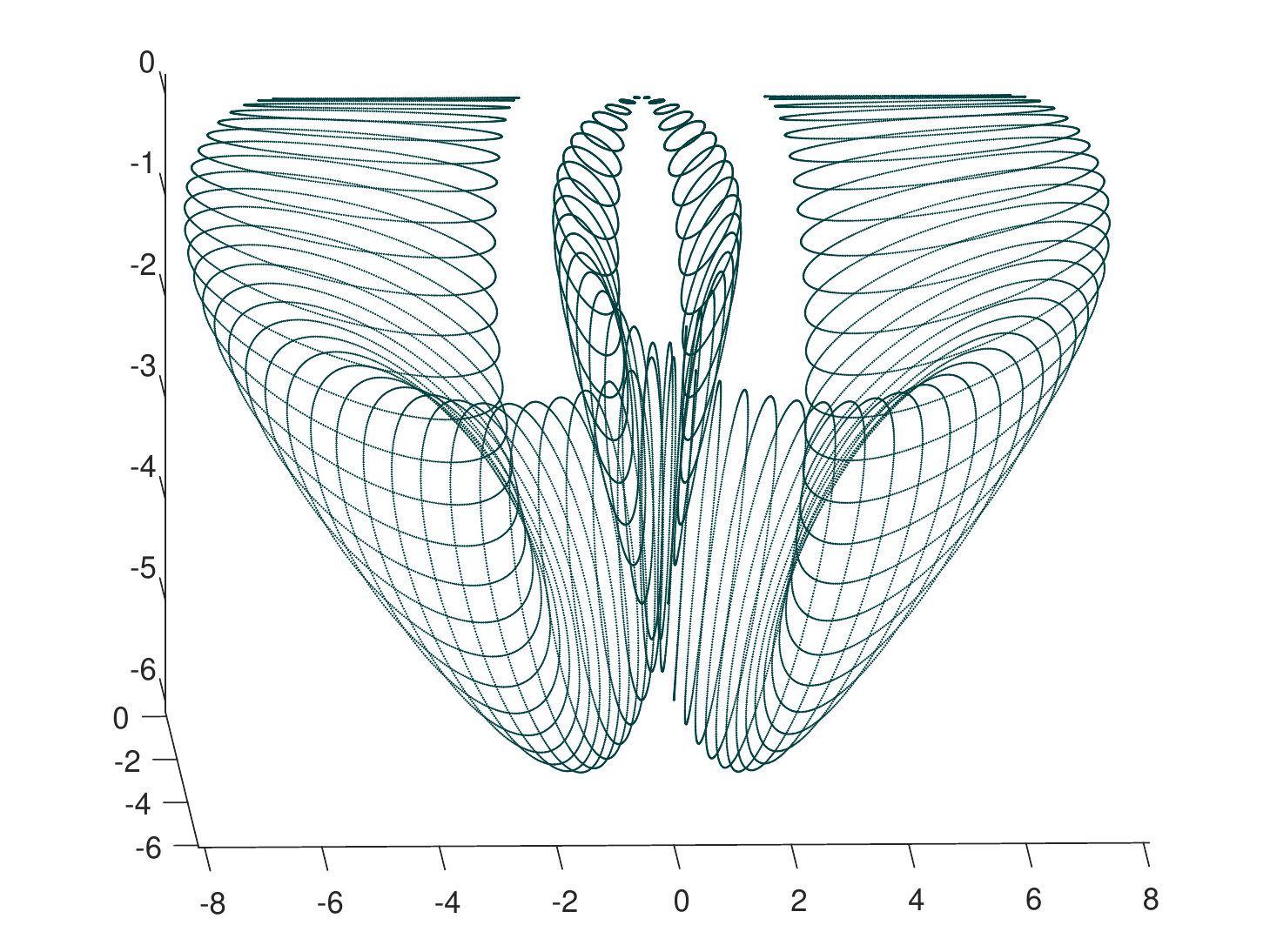}}
\caption{The initial data points to be fitted, which are sampled from Surfaces 1 (a) and 2 (b) with $m=400$ and $p=100$.}
\label{fig:PmRGRK+Ex5_Surface_initial}
\end{figure}

\begin{table}[!htb]
 \normalsize
\caption{RRN, IT, and CPU for the ME-RGRK, PmRGRK, and NmRGRK methods with $m=400$, $p=100$, and $n=100$ in Example \ref{Ex:CAGD-AXB=C}.}
\centering
\begin{tabular}{|l|c|c|c|c|}
\cline{1-5}
\multicolumn{2}{|l|}{Methods}&ME-RGRK&PmRGRK&NmRGRK\\
\cline{1-5}
\multirow{3}*{\rm Surface~1} &RRN      &$\#$             &$4.9999\times 10^{-4}$&$4.9999\times 10^{-4}$\\ \cline{2-5}
                             &IT       &$>1\times 10^{5}$&$85835$  &$72776$\\ \cline{2-5}
                             &CPU      &$\#$             &131.94   &129.59\\ \hline
\multirow{3}*{\rm Surface~2 }&RRN      &$\#$             &$4.9998\times 10^{-4}$&$4.9995\times 10^{-4}$\\ \cline{2-5}
                             &IT       &$>1\times 10^{5}$  &$85280$  &$72052$\\ \cline{2-5}
                             &CPU      &$\#$               &125.14   &122.67\\  \hline
\end{tabular}
\begin{tablenotes}
\footnotesize
\item[1.] {\it The item ' $>1\times 10^{5}$' represents that the number of iteration steps exceeds $1\times 10^{5}$. In this case, the corresponding RRN and CPU are expressed by $'\#'$.}
\end{tablenotes}
\label{tab:PmRGRK+NmRGRK:Exp5+m400p100n100}
\end{table}

\begin{figure}[!htb]
\subfigure[The PmRGRK surface]{\includegraphics[width=0.48\textwidth]{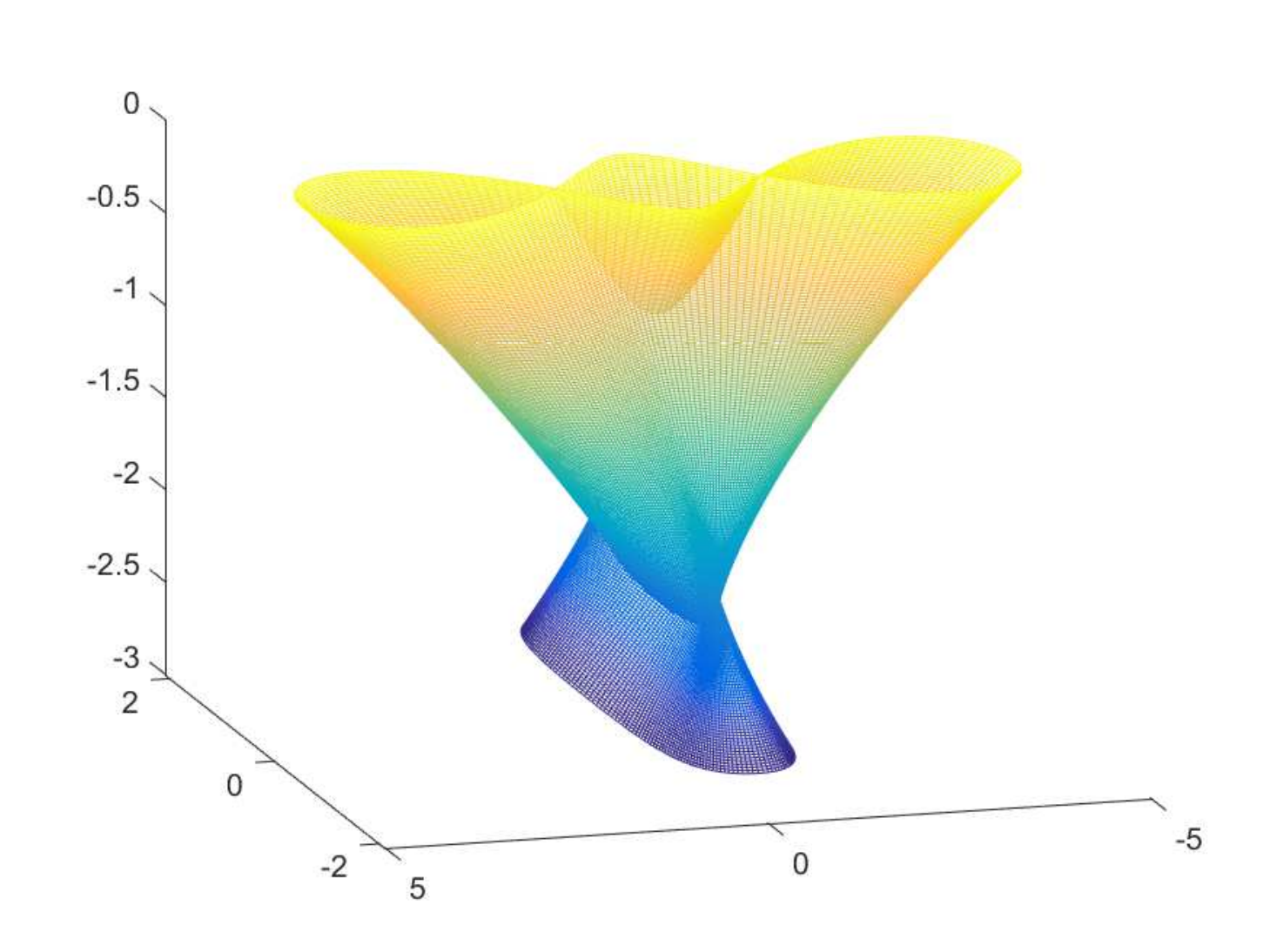}}
\subfigure[The NmRGRK surface]{\includegraphics[width=0.48\textwidth]{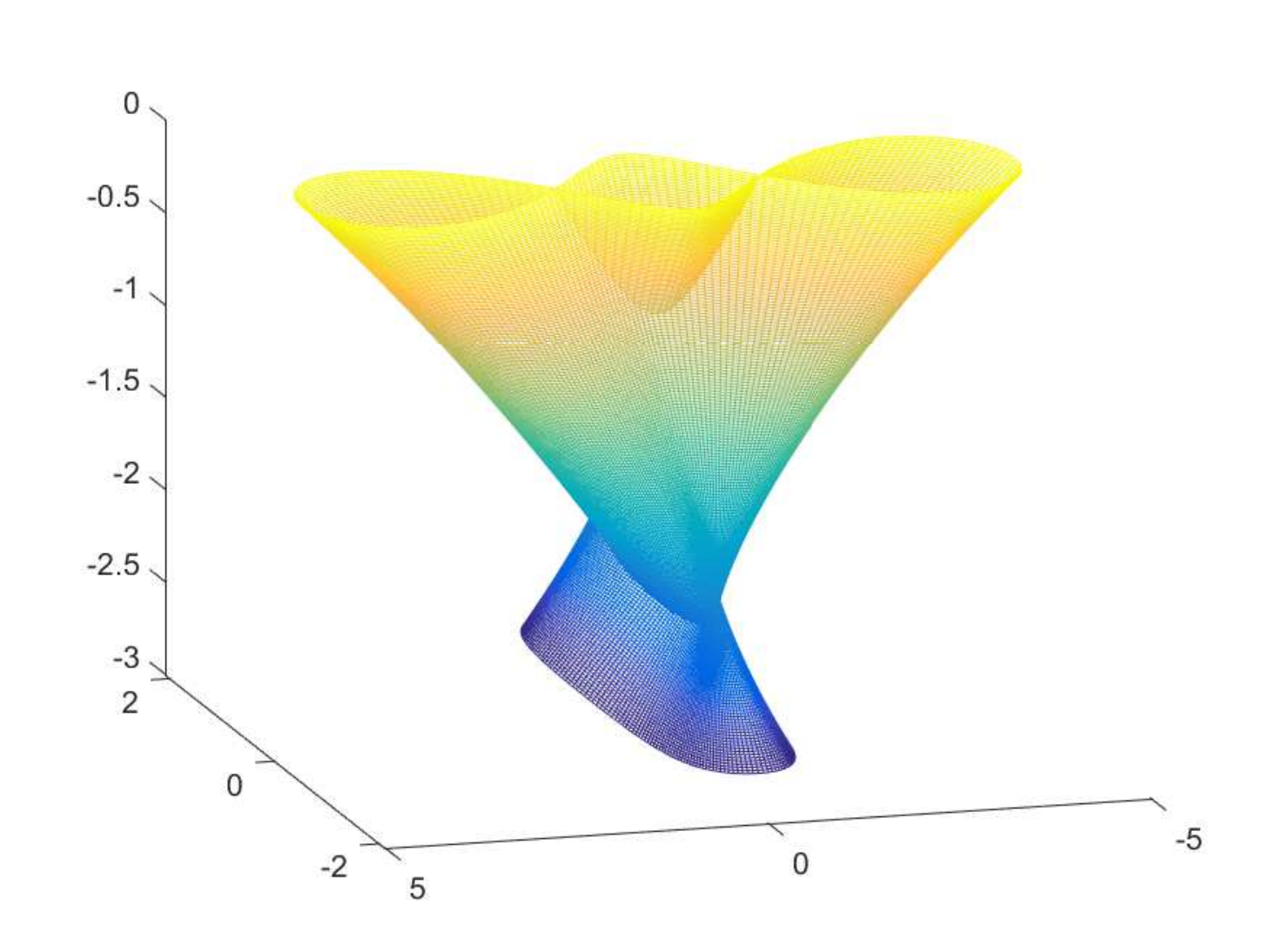}}
\caption{The first kind of fitting surfaces generated by the PmRGRK (a) and NmRGRK (b) methods for Example \ref{Ex:CAGD-AXB=C}.}
\label{PmRGRK+NmRGRK-fig:VerrillMinimal}
\end{figure}

\begin{figure}[!htb]
\subfigure[The PmRGRK surface]{\includegraphics[width=0.48\textwidth]{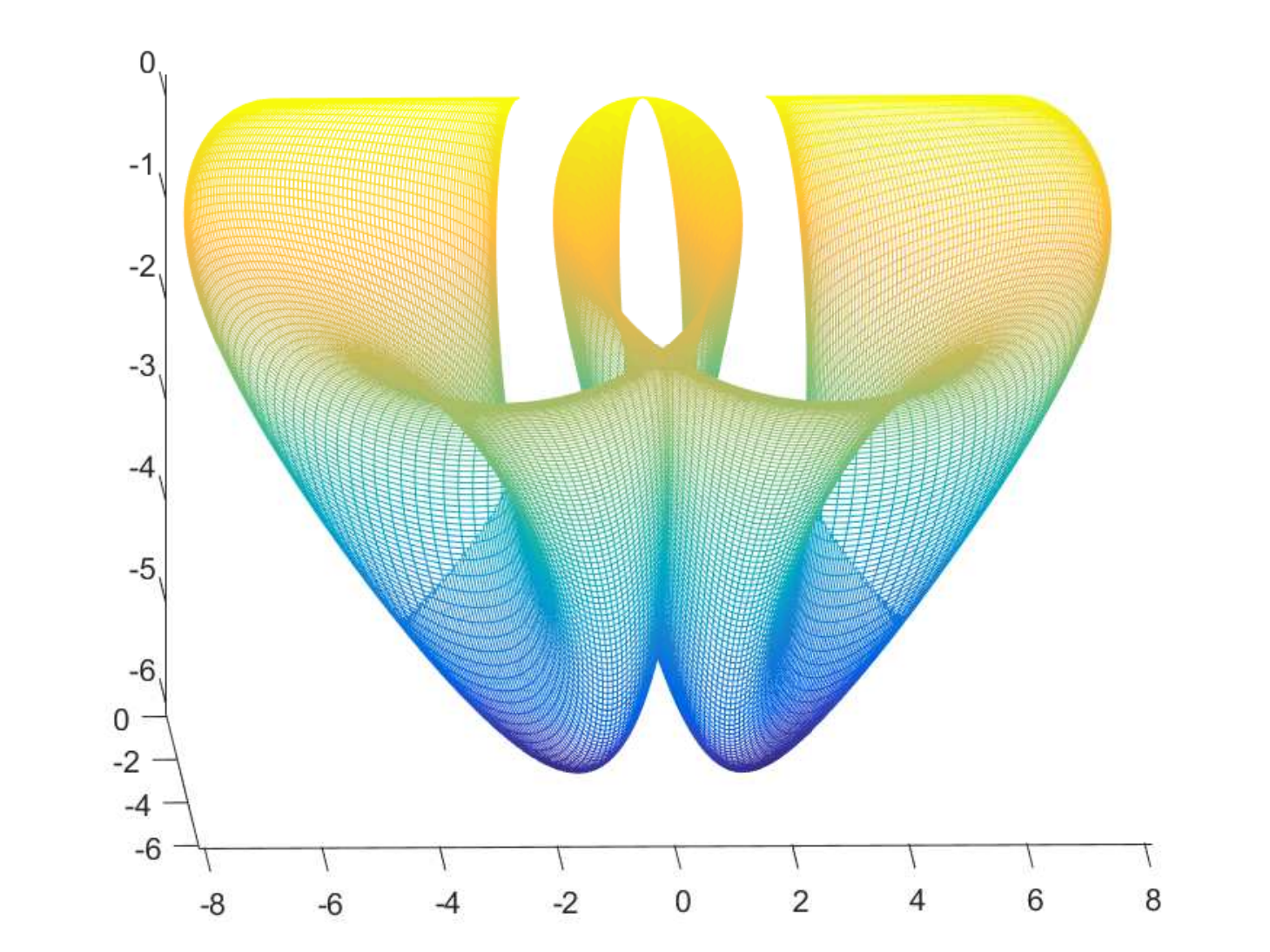}}
\subfigure[The NmRGRK surface]{\includegraphics[width=0.48\textwidth]{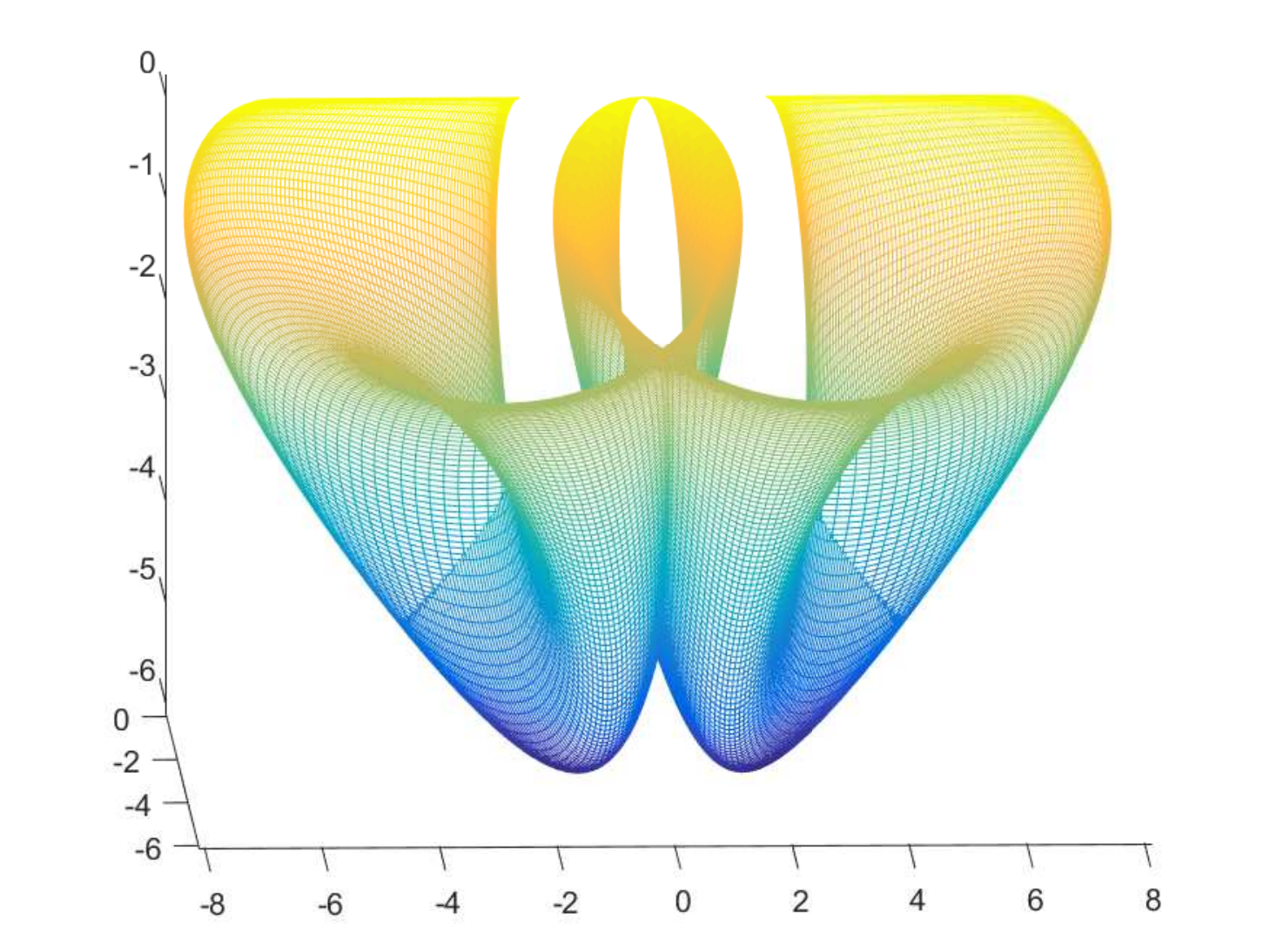}}
\caption{The second kind of fitting surfaces generated by the PmRGRK (a) and NmRGRK (b) methods for Example \ref{Ex:CAGD-AXB=C}.}
\label{PmRGRK+NmRGRK-fig:BentHorns}
\end{figure}

\section{Conclusions}\label{sec:PmRGRK+NmRGRK_conclud}

For iteratively computing the minimum Frobenius-norm least-squares solution of a consistent matrix equation, the ME-RGRK method \cite{22WLZ} was proposed by combining the matrix version of Kaczmarz iteration scheme with the relaxed greedy randomized index selection strategy. To further accelerate the convergence rate of the ME-RGRK method, in this work, we utilize Polyak's  and Nesterov's  momentum acceleration techniques and present the PmRGRK and NmRGRK methods. Convergence theories have been developed. The corresponding computational complexity analyses are also given. Some numerical examples, where the coefficient matrix is obtained from synthetic data and tensor product surface fitting, are presented to demonstrate their numerical advantage over the ME-RGRK method in terms of iteration counts and computing times. Numerical results illustrate that the PmRGRK and NmRGRK methods are competing Kaczmarz variants for solving the consistent matrix equation \eqref{AXB=C}.

Finally, we would like to make some comments on the possible extensions of our methods.

(1) We choose to use the fixed step-size and momentum parameter at each iteration for the PmRGRK and NmRGRK methods, but it is possible to extend the method to have varying ones. For example, the asynchronous PmRGRK iteration is given by
 \begin{align*}
 X^{(k+1)} = X^{(k)} +  \alpha_k \frac{C_{i,j}-a_i^T X^{(k)} b_j }{\norm{a_i}^2  \norm{b_j}^2} a_i  b_j^T
 + \beta_k ( X^{(k)} - X^{(k-1)} ),
\end{align*}
for $k=1,2,\cdots$ with $i\in [m]$ and $j\in [p]$, where $\alpha_k$ and $\beta_k$ are the adaptive step-size and momentum parameter, respectively. In particular, the theoretically upper bounds for these two parameters derived in Theorems \ref{thm:CA-PmRGRK} and \ref{thm:CA-NmRGRK} are difficult to estimate a priori. The promising adaptive selection strategy can be achieved by the information available at the beginning as analyzed in, e.g., \cite{16LW}.

(2) Variants of the Kaczmarz method that make use of more than a single row index pair at each iteration, are often referred to as block methods. At the $k$th iteration, a block of row index pair $(i,j)$ from $\Delta_k$ is selected. Then, the projections of $X^{(k)}$ onto each row index pair in $\Delta_k$ may be computed and averaged, such as in a weighted fashion. The asynchronous $\alpha_k$ and $\beta_k$ potentially dependent on the iteration are used. Taking the Polyak's  momentum as a example, the resulting update rule is  given by
\begin{align*}
 X^{(k+1)} = X^{(k)} +  \alpha_k\sum_{(i,j)\in \Delta_k} \omega_{i,j}^{(k)}\frac{C_{i,j}-a_{i}^T X^{(k)} b_{j} }{\norm{a_{i}}^2  \norm{b_{j}}^2} a_{i}  b_{j}^T
 + \beta_k ( X^{(k)} - X^{(k-1)} ),
\end{align*}
where the weights $\omega_{i,j}^{(k)}$ satisfy $\sum_{(i,j)\in \Delta_k} \omega_{i,j}^{(k)}=1$. The randomized averaged block method with no momentum can be found in \cite{22NZ}.


\section*{Acknowledgment}
The authors express their appreciation for supports provided by the National Natural Science Foundation of China under grants 12201651 and 52263002. The third author was also supported by the Natural Science Foundation of Shanxi Province under grant  20210302123480. These supports are gratefully acknowledged.


\begin{thebibliography}{99}

\addtolength{\itemsep}{0.5em}

\bibitem{18BW1} Zhong-Zhi Bai and Wen-Ting Wu. {\it On greedy randomized Kaczmarz method for solving large sparse linear systems}. SIAM Journal on Scientific Computing, 2018, 40(1):A592-A606.

\bibitem{18BW2} Zhong-Zhi Bai and Wen-Ting Wu. {\em On relaxed greedy randomized Kaczmarz methods for solving large sparse linear systems}. Applied Mathematics Letters, 2018, 83:21-26.

\bibitem{21BW} Zhong-Zhi Bai and Wen-Ting Wu. {\it On greedy randomized augmented Kaczmarz method for solving large sparse inconsistent linear systems.} SIAM Journal on Scientific Computing, 2021, 43(6):A3892-A3911.

\bibitem{16CHL} Gary Pui-Tung Choi, Kin Tat Ho, and Lok Ming Lui. {\em Spherical conformal parameterization of genus-0 point clouds for meshing}. SIAM Journal on Imaging Sciences, 2016, 9(4):1582-1618.

\bibitem{08DH} Mehdi Dehghan and Masoud Hajarian. {\em An iterative algorithm for solving a pair of matrix equations $AYB=E$,  $CYD=F$ over generalized centro-symmetric matrices}. Computers and Mathematics with Applications, 2008, 56(12):3246-3260.


\bibitem{14DL} Chongyang Deng and Hongwei Lin. {\em Progressive and iterative approximation for least-squares B-spline curve and surface fitting}. Computer-Aided Design, 2014, 47:32-44.


\bibitem{08DLD} Feng Ding, Peter X. Liu, and Jie Ding. {\em Iterative solutions of the generalized Sylvester matrix equations by using the hierarchical identification principle}. Applied Mathematics and Computation, 2008, 197(1):41-50.


\bibitem{22DRS} Kui Du, Cheng-Chao Ruan, and Xiao-Hui Sun. {\em On the convergence of a randomized block coordinate descent algorithm for a matrix least squares problem}. Applied Mathematics Letters, 2022, 124:107689.


\bibitem{20DSS} Kui Du, Wu-Tao Si, and Xiao-Hui Sun. {\em Randomized extended average block Kaczmarz for solving least squares}. SIAM Journal on Scientific Computing, 2020, 42(6):A3541-A3559.


\bibitem{02Farin}  Gerald Farin. {\it Curves and Surfaces for CAGD: A Practical Guide}.  Morgan Kaufmann, Fifth Edition, San Francisco, 2002.


\bibitem{13GSJ}Euhanna Ghadimi, Iman Shames, and Mikael Johansson. {\em Multi-step gradient methods for networked optimization}. IEEE Transactions on Signal Processing, 2013, 61(21):5417-5429.

\bibitem{22HNRS} Jamie Haddock, Deanna Needell, Ellzaveta Rebrova, and William Swartworth. {\em Quantile-based iterative methods for corrupted systems of linear equations}. SIAM Journal on Matrix Analysis and Applications, 2022, 43(2):605-637.

\bibitem{22HSX} Deren Han, Yansheng Su, and Jiaxin Xie. {\em  Randomized Douglas-Rachford method for linear systems: improved accuracy and efficiency}. Preprint, 2022, arXiv:2207.04291v1.

\bibitem{22HX} Deren Han and Jiaxin Xie. {\em  On pseudoinverse-free randomized methods for linear systems-unified framework and acceleration}.  Preprint, 2022, arXiv:2208.05437v1.


\bibitem{22HM} Baohua Huang and Changfeng Ma. {\em On the relaxed gradient-based iterative methods for the generalized coupled Sylvester-transpose matrix equations}. Journal of the Franklin Institute, 2022, 359(18):10688-10725.


\bibitem{22JWH} Xin Jiang, et al. {\em Scattered points interpolation with globally smooth B-spline surface using iterative knot insertion}. Computer-Aided Design, 2022, 148:103244.

\bibitem{Kac37} S. Kaczmarz. {\em  Angen\"{a}herte aufl\"{o}sung von systemen linearer gleichungen}. Bulletin International de l'Academie Polonaise des Sciences A, 1937, 35:355-357.

\bibitem{14KM} Yi-Fen Ke and Chang-Feng Ma. {\em A preconditioned nested splitting conjugate gradient iterative method for the large sparse generalized Sylvester equation}. Computers and Mathematics with Applications, 2014, 68(10):1409-1420.

\bibitem{02LPRM} Bruno L\'{e}vy, Sylvain Petitjean, Nicolas Ray, and J\'{e}r\^{o}me Maillot.  {\em Least squares conformal maps for automatic texture atlas generation}. ACM Transactions on Graphics, 2002, 21(3):362-371.

\bibitem{20LLFZ} Zhongyun Liu, Zhen Li, Carla Ferreira, and Yulin Zhang.  {\em Stationary splitting iterative methods for the matrix equation $AXB = C$}. Applied Mathematics and Computation, 2020, 378:125195.

\bibitem{16LW} Ji Liu and Stephen J. Wright. {\em An accelerated randomized Kaczmarz algorithm}. Mathematics of Computation, 2016, 85(297):153-178.

\bibitem{20LR} Nicolas Loizou and Peter Richt\'{a}rik. {\em Momentum and stochastic momentum for stochastic gradient, Newton, proximal point and subspace descent methods}. Computational Optimization and Applications, 2020, 77(3):653-710.

\bibitem{22MIA} Md Sarowar Morshed, Md Saiful Islam, and Md Noor-E-Alam. {\em Sampling Kaczmarz-Motzkin method for linear feasibility problems: generalization and acceleration}. Mathematical Programming, 2022, 194:719-779.

\bibitem{83Nest} Yurii Nesterov. {\em A method for solving the convex programming problem with convergence rate $O(1/k\sp{2})$}. (Russian) Doklady Akademii Nauk Sssr, 1983, 269(3):543-547.

\bibitem{03Nest} Yurii Nesterov. {\em Introductory Lectures on Convex Optimization: A Basic Course}. Springer Science and Business Media, New York, First Edition, 2014.


\bibitem{22NZ} Yu-Qi Niu and Bing Zheng. {\em On global randomized block Kaczmarz algorithm for solving large-scale matrix equations}. Preprint, 2022,  arXiv: 2204.13920v1.

\bibitem{97PT} Les Piegl and Wayne Tiller. {\em The NURBS Book}. Springer-Verlag, New York, USA, Second Edition, 1997.

\bibitem{64Polyak} B. T. Polyak. {\em Some methods of speeding up the convergence of iteration methods}. USSR Computational Mathematics and Mathematical Physics, 1964, 4(5):1-17.

\bibitem{51RM} Herbert Robbins and Sutton Monro. {\it A stochastic approximation method}. The Annals of Mathematical Statistics, 1951, 22(3):400-407.

\bibitem{22WLZ} Nian-Ci Wu, Cheng-Zhi  Liu, and Qian  Zuo. {\em On the Kaczmarz methods based on relaxed greedy selection for solving matrix equation $AXB=C$}. Journal of Computational and Applied Mathematics, 2022, 413:114374.




\bibitem{Saad2003} Yousef Saad. {\em Iterative Methods for Sparse Linear Systems}.  SIAM, Philadelphia, PA, USA, Second Edition, 2003.

\bibitem{16Sim} V. Simoncini. {\em  Computational methods for linear matrix equations}. SIAM Review, 2016,  58(3):377-441.


\bibitem{09SV} Thomas Strohmer and Roman Vershynin. {\em  A randomized Kaczmarz algorithm for linear systems with exponential convergence}.  Journal of Fourier Analysis and Applications, 2009, 15(2):262-278.

\bibitem{13SMDH} Ilya Sutskever, James Martens, George Dahl, and Geoffrey Hinton. {\em On the importance of initialization and momentum in deep learning}. Proceedings of the $30$th International Conference on International Conference on Machine Learning, 2013, 28:1139-1147.

\bibitem{14WM} Hongbin Wang and Paul C. Miller. {\em Scaled Heavy-Ball acceleration of the Richardson-Lucy algorithm for 3D microscopy image restoration}. IEEE Transactions on Image Processing, 2014, 23(2):848-854.





\bibitem{19Zhang} Huamin Zhang. {\em Quasi gradient-based inversion-free iterative algorithm for solving a class of the nonlinear matrix equations}. Computers and Mathematics with Applications, 2019, 77(5):1233-1244.



\end{thebibliography}
\end{document}